\theoremstyle{plain}
\newcommand{\tensor}{\otimes}
\newtheorem{thm}{Theorem}[section]
\newtheorem{lem}[thm]{Lemma}
\theoremstyle{definition}
\newtheorem*{thm*}{Theorem}
\theoremstyle{remark}
\begin{document}
\title{The Demailly systems with the vortex ansatz}
\author{Arindam Mandal}
\address{Department of Mathematics, Indian Institute of Science, Bangalore 560012, India}
\email{arindamm@iisc.ac.in}
\maketitle
\begin{abstract}
    For an arbitrary-rank vector bundle over a projective manifold, J.-P. Demailly proposed several systems of equations of Hermitian-Yang-Mills type for the curvature tensor to settle a conjecture of Griffiths on the equivalence of Hartshorne ampleness and Griffiths positivity. In this article, we have studied two proposed systems and proved that these equations have smooth solutions for the Vortex bundle using the continuity method.
\end{abstract}
\let\thefootnote\relax\footnote{ Keywords: Holomorphic vector bundle, Ample vector bundle, Hermitian metric, curvature tensor, Griffiths positivity, Nakano positivity, dual Nakano positivity, elliptic operator.}
\section{Introduction}
Let $X$ be an $n$-dimensional projective manifold. A rank-$r$ holomorphic vector bundle $E$ over $X$ is said to be ample in the Hartshorne sense \cite{Har} if and only if the line bundle $\mathcal{O}_{\mathbb{P}(E)}(1)$ is ample over $\mathbb{P}(E)$. The Chern curvature tensor $\Theta_{E,h}$ of a Hermitian metric $h$ is said to be Griffiths positive if $\langle \sqrt{-1}\Theta_{E,h}(\zeta, \bar{\zeta}).v, v \rangle_h$ is positive for all decomposable nonzero elements $\zeta \tensor v \in T_X\tensor E$, and Nakano positive if the bilinear form on $T_X\tensor E$ defined by $\sqrt{-1}\Theta_{E,h}$ is positive. Nakano positivity and dual Nakano positivity (the bundle $(E^*,h^*)$ is Nakano negative) imply Griffiths positivity, which is equivalent to dual Griffiths positivity (the bundle $(E^*,h^*)$ is Griffiths negative). Griffiths positivity implies ampleness. B. Berndtsson \cite{Bo} has proved that for every positive integer $m$, $S^m E \tensor det E $ is Nakano positive if $E$ is ample. The tangent bundle $T{\mathbb{P}^n}$ of the complex projective space $\mathbb{P}^n$ is ample but not Nakano positive, and ampleness does not imply Nakano positivity (see \cite{Demailly} for details). A conjecture of Griffiths \cite{Gri} asks if Hartshorne ampleness implies Griffiths positivity. This conjecture is still open in its full generality. However, the conjecture holds for vector bundles on smooth curves, i.e., for $n=1$, see \cite{CaFl} and \cite{Um} for more details. Much work has been done in this direction (see \cite{LSY, MT, Na} and the references therein). J. P. Demailly  \cite{Demailly} introduced systems of PDE of Hermitian-Yang-Mills type for the curvature tensor to prove the equivalence between ampleness and Griffiths positivity. Let $(E,H_0)$ be smooth Hermitian holomorphic vector bundle of rank $r$ over $X$ such that $E$ is ample and $\omega_0=\sqrt{-1}\Theta_{det E, det H_0}>0$. Then one of Demailly's systems for time-dependent metrics $h_t, t \in [0,1]$ is as follows
\begin{equation}\label{dem1}
\begin{split}
& \omega_0^{-n}det_{T_X\tensor E^*} \Big( \sqrt{-1}\Theta_{E,h_t}+(1-t)\alpha \omega_0\tensor Id_{E^*} \Big)^{\frac{1}{r}}= \Big( \frac{det H_0}{det h_t} \Big)^{\lambda}a_0,\\
& \omega_0^{-n}\Big(\omega_0^{n-1} \wedge \sqrt{-1}(\Theta_{E,h_t}- \frac{1}{r}\Theta_{det E, det h_t}\tensor Id_E) \Big)\\
& \qquad \qquad =-\varepsilon \Big( \frac{det H_0}{det h_t} \Big)^{\mu}ln\Bigg( \frac{h_tH_0^{-1}}{det(h_tH_0^{-1})^{\frac{1}{r}}} \Bigg),
\end{split}
\end{equation}
where $\lambda \geq 0, \mu \in \mathbb{R}$, $\varepsilon >0$ and $a_0=\omega_0^{-n}det_{T_X\tensor E^*} \Big( \sqrt{-1}\Theta_{E,h_0}+\alpha \omega_0\tensor Id_{E^*} \Big)^{\frac{1}{r}}>0$. The metric $h_0$ is a solution of the second equation (cushioned Hermite-Einstein equation) of system \eqref{dem1} at $t=0$ with the condition $det h_0= det H_0$. With the notations same as above, one more variant of the above system is
\begin{equation}\label{dem2}
\begin{split}
&\omega_0^{-n}det_{T_X\tensor E^*} \Big( \sqrt{-1}\Theta_{E,h_t}+(1-t)\alpha \omega_0\tensor Id_{E^*} \Big)^{\frac{1}{r}}= \Big( \frac{det H_0}{det h_t} \Big)^{\lambda}a_0,\\
& \omega_0^{-n}\Big(\omega_t^{n-1} \wedge \sqrt{-1}(\Theta_{E,h_t}- \frac{1}{r}\Theta_{det E, det h_t}\tensor Id_E) \Big)\\
& \qquad \qquad =-\varepsilon \Big( \frac{det H_0}{det h_t} \Big)^{\mu}ln\Bigg( \frac{h_tH_0^{-1}}{det(h_tH_0^{-1})^{\frac{1}{r}}} \Bigg),
\end{split}
\end{equation}
where $\omega_t=\frac{1}{1+r\alpha}(\sqrt{-1}\Theta_{det E, det h_t}+r(1-t)\alpha \omega_0)$ and the metric $h_0$ is a solution of second equation of the system \eqref{dem2} satisfying $det h_0= det H_0$. Even though for both of the above systems, the existence of the metric $h_0$ is clear from \cite{UY86}, for our purpose, in the case of Vortex bundle, we shall discuss the existence separately (subsections \ref{initial1} and \ref{initial2}). If one can prove the existence of $h_t$ for all time $t\in [0,1]$, for one of the above systems, then $h_t$ will be dual Nakano positive for $t=1$. Thus a stronger result than Griffiths conjecture will have been proven, which will certainly settle the Griffiths conjecture. It turns out that there exists ample bundle which is not dual Nakano positive; see \cite{Demailly} for a detailed example. Therefore one should not expect the existence of solutions of Demailly's systems for all time $t\in [0,1]$ in general.  V. P. Pingali \cite{directsum} has studied the system \eqref{dem1} for the direct sum of ample line bundles on Riemann surfaces using Leray-Schauder degree theory. In this article, using the continuity method, we have studied systems \eqref{dem1} and \ref{dem2} on the Vortex bundle.
\\

We follow the construction of Vortex bundles as in \cite{Prada} and \cite{Vec}. Let $\Sigma$ be a compact Riemann surface with a background Hermitian metric $k$ on an ample holomorphic line bundle $L$ such that $\omega_{\Sigma}=\sqrt{-1} \Theta_k$ is the K\"ahler metric, where $\Theta_k$ is the curvature of the metric $k$. Consider $\mathbb{CP}^1$ with the metric $h_{FS}$ on the line bundle $\mathcal{O}(1)$, who's curvature is the Fubini-study metric $\omega_{FS}=\frac{\sqrt{-1} dz \wedge d\bar{z}}{(1+|z|^2)^2}$. Define the rank-2 vector bundle $E$ on the projective manifold $X=\Sigma \times \mathbb{CP}^1$ by $E=\pi_{1}^*((r_1+1)L) \tensor \pi_{2}^*(r_2\mathcal{O}(2)) \oplus \pi_{1}^*(r_1L) \tensor \pi_{2}^*((r_2+1)\mathcal{O}(2))$, where $\pi_1 : X \rightarrow \Sigma $ and $\pi_2 : X \rightarrow \mathbb{CP}^1$ are the projection maps and $r_1, r_2$ are positive integers. Let $\phi \in H^0(\Sigma, L)$ be a global holomorphic section. We see that multiplication of the metric $k$ by a nonzero constant does not change the curvature $\Theta_k$. So we assume $k$ has been rescaled so that $|\phi|_k^2 \leq \frac{1}{2}$. Define a holomorphic structure on $E$ by the second fundamental form $\beta=\pi_1^*\phi \tensor \pi_2^* \Big(\frac{\sqrt{8\pi}dz}{(1+|z|^2)^2}\tensor d\bar{z} \Big)$. Let $E$ be equipped with metric $h_t=\pi_{1}^*(e^{-(f_t+\psi_t)}k^{r_1+1}) \tensor \pi_{2}^*(h_{FS}^{2r_2}) \oplus \pi_{1}^*(e^{-f_t}k^{r_1}) \tensor \pi_{2}^*(h_{FS}^{2r_2+2})$, where $f_t$ and $\psi_t$ are smooth function on $\Sigma$. Suppose $\Tilde{h}_t=\pi_{1}^*(e^{-(f_t+\psi_t)}k^{r_1+1}) \tensor \pi_{2}^*(h_{FS}^{2r_2})$, $\Tilde{g}_t=\pi_{1}^*(e^{-f_t}k^{r_1}) \tensor \pi_{2}^*(h_{FS}^{2r_2+2})$ and $g_t=e^{-\psi_t}k$. Then the Chern connection of $(E,h_t)$ for the holomorphic structure given by $\beta$ is given by the following connection matrix 
\begin{equation*}
A_{h_t}=
\begin{bmatrix}
 A_{\Tilde{h}_t} &\beta \\
 -\beta^{\dag_{g_t}} & A_{\Tilde{g}_t}
\end{bmatrix}.
\end{equation*}
Its curvature matrix is
\begin{equation*}
\Theta_{h_t}=
\begin{bmatrix}
 \Theta_{\Tilde{h}_t}-\beta \wedge \beta^{\dag_{g_t}} &\nabla^{1,0}\beta \\
 -\nabla^{0,1}\beta^{\dag_{g_t}} & \Theta_{\Tilde{g}_t}-\beta^{\dag_{g_t}}\wedge \beta
\end{bmatrix}.
\end{equation*}
Let $H_0=\pi_{1}^*(k^{r_1+1}) \tensor \pi_{2}^*(h_{FS}^{2r_2}) \oplus \pi_{1}^*(k^{r_1}) \tensor \pi_{2}^*(h_{FS}^{2r_2+2})$ be the background metric on the Vortex bundle $E$. Then $\omega_0=(2r_1+1)\omega_{\Sigma}+(4r_2+2)\omega_{FS}$. Choosing $\lambda=0, \mu=0$, and $\varepsilon=1$, the system \eqref{dem1} for the Vortex bundle will be the following decoupled system of equations.
\begin{equation}\label{01}
\begin{split}
 & \Big\{ \Big( \Delta f_t +\Delta {\psi}_t +(r_1 +1)+ \alpha (1-t)(2r_1+1)\Big) \Big( 2r_2 + |\phi|_{g_t}^2 +  \\
 & \alpha (1-t)(4r_2+2) \Big) \Big( \Delta f_t + r_1+ \alpha (1-t)(2r_1+1)\Big)  \\
 & \Big( (2r_2+2)- |\phi|_{g_t}^2 + \alpha (1-t)(4r_2+2) \Big)\Big\} + \sqrt{-1} \frac{\nabla^{1,0} \phi \wedge \nabla^{0,1} \phi^{\dag_{g_t}}}{\omega_{\Sigma}}\\
 & \Big( 2r_2 + |\phi|_{g_t}^2 + \alpha (1-t)(4r_2+2) \Big)\Big( \Delta f_t + r_1+ \alpha (1-t)(2r_1+1)\Big)\\
 = & \Big\{ \Big(\Delta f_0 + \Delta \psi_0+ (r_1 + 1 ) +\alpha (2r_1+1) \Big)\Big( 2r_2+ |\phi|_{g_0}^2 + \alpha (4r_2+2) \Big)  \\
 & \Big( (2r_2+2)- |\phi|_{g_0}^2 + \alpha (4r_2+2) \Big) \Big( \Delta f_0 + r_1+ \alpha (2r_1+1)\Big)\Big\}\\
  &  + \sqrt{-1} \frac{\nabla^{1,0} \phi \wedge \nabla^{0,1}\phi^{\dag_{g_0}}}{\omega_{\Sigma}}\Big( 2r_2 + |\phi|_{g_0}^2 + \alpha (4r_2+2) \Big)\\
  & \Big(\Delta f_0 + r_1+ \alpha (2r_1+1)\Big),
 \end{split}
\end{equation}

\begin{equation}\label{02}
 (2r_1+1)\big( |\phi|_{g_t}^2 -1 \big) +\big( \Delta \psi_t +1 \big)(2r_2+1)= (2r_1 + 1)(4r_2 + 2) \psi_t,
\end{equation}
where $h_0=\pi_{1}^*(e^{-(f_0 + \psi_0)}k^{r_1+1}) \tensor \pi_{2}^*(h_{FS}^{2r_2}) \oplus \pi_{1}^*(e^{-f_0}k^{r_1}) \tensor \pi_{2}^*(h_{FS}^{2r_2+2})$ is the solution of the equation \eqref{02} at $t=0$, satisfying $det h_0= det H_0$. The existence of such $h_0$ is discussed in the subsection \ref{initial1} and $\alpha >0$ is a large enough constant so that $\sqrt{-1} \Theta_{h_0} + \alpha \omega_{0} \tensor Id_{E^*} >0$ in the sense of Nakano and $\Delta f_0 + r_1 + \alpha (2r_1+1)>0$. We now state one of our results.
 \begin{thm}\label{main1}
 The system defined by equations \eqref{01} and \eqref{02} has smooth solution $(f_t, \psi_t)$ such that $\Delta f_t + r_1 + \alpha (1-t)(2r_1+1) >0$  for all $t\in [0,1].$
 \end{thm}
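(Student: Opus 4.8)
The plan is to first eliminate the $\psi$‑equation and then run a continuity method in $t$. Equation \eqref{02} involves only $\psi_t$ and has no explicit $t$‑dependence; rewriting it as $(2r_2+1)\Delta\psi_t=\mathcal N(\psi_t)$ with $\mathcal N'(\psi)=(2r_1+1)\big((4r_2+2)+e^{-\psi}|\phi|_k^2\big)>0$, the maximum principle shows a solution is unique, and its existence (a smooth $\psi_0$) is supplied in subsection~\ref{initial1}. One may therefore take $\psi_t\equiv\psi_0$ for every $t$; the normalization $\det h_0=\det H_0$ forces $f_0=-\tfrac12\psi_0$, so \eqref{01} holds at $t=0$. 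With $\psi_0$ fixed, \eqref{01} reduces to a single scalar equation $\mathcal F(f_t,t)=0$ for $f_t$ on $\Sigma$: setting $v:=\Delta f_t+r_1+\alpha(1-t)(2r_1+1)$ and writing $a$, $c$ for the first and third factors in \eqref{01} (so $c=v$ and $a=v+\Delta\psi_0+1$), it becomes $b_t\,v\,\big(a\,d_t+q\big)=a_0$, where $b_t=2r_2+|\phi|_{g_0}^2+\alpha(1-t)(4r_2+2)$ and $d_t=(2r_2+2)-|\phi|_{g_0}^2+\alpha(1-t)(4r_2+2)$ are positive for $\alpha$ large, $q=\sqrt{-1}\,\dfrac{\nabla^{1,0}\phi\wedge\nabla^{0,1}\phi^{\dag_{g_0}}}{\omega_\Sigma}\ge0$, and $a_0>0$ is the fixed value of the left side of \eqref{01} at $t=0$. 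Substituting $a=v+\Delta\psi_0+1$ this is a quadratic in $v$ with positive leading coefficient $b_td_t$ and constant term $-a_0<0$, hence with a unique positive root $G_t(x)$, smooth in $(x,t)$; so on the locus $v>0$, \eqref{01} is equivalent to the linear Poisson equation $\Delta f_t=G_t(x)-r_1-\alpha(1-t)(2r_1+1)$, whose solvability is the single scalar constraint $\int_\Sigma\big(G_t-r_1-\alpha(1-t)(2r_1+1)\big)\,\omega_\Sigma=0$.

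Let $S=\{t_0\in[0,1]:\eqref{01}\text{ has a smooth solution }f_{t_0}\text{ with }\Delta f_{t_0}+r_1+\alpha(1-t_0)(2r_1+1)>0\}$; then $0\in S$, and it remains to show $S$ is open and closed. For openness at $t_0\in S$, the linearization of $\mathcal F$ in $f$ is $D_f\mathcal F\,u=b_{t_0}\big((a+c)\,d_{t_0}+q\big)\Delta u$, a strictly elliptic second‑order operator: its coefficient is positive because $c=v>0$ is imposed and $a=v+\Delta\psi_0+1>0$ for $\alpha$ large. Since $D_f\mathcal F$ has a one‑dimensional kernel (the constants) and a one‑dimensional cokernel, one applies the implicit function theorem in H\"older spaces modulo constants, the relevant Fredholm/solvability condition being exactly the scalar constraint above, which must be propagated from $t_0$ to nearby $t$. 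This propagation is the place where the structure of Demailly's system is used: it should follow from a Chern--Weil‑type invariance obtained by integrating the first equation of the ambient system \eqref{dem1} over $X=\Sigma\times\mathbb{CP}^1$ in the vortex ansatz. Granting it, one obtains smooth $f_t$ for $t$ near $t_0$ (indeed just by solving the Poisson equation with the distinguished root), and the open condition $v=G_t>0$ persists; hence $S$ is open.

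For closedness, let $t_j\to t_\infty$ with $t_j\in S$ and solutions $f_{t_j}$ normalized by $\int_\Sigma f_{t_j}\,\omega_\Sigma=0$. The coefficients $b_t,d_t,q,\Delta\psi_0,a_0$ are smooth and uniformly bounded on $[0,1]$, with $b_td_t$ and $a_0$ bounded below by positive constants; hence the positive root $G_t(x)$ is bounded above and bounded below away from zero, uniformly in $t$, so $\Delta f_{t_j}=G_{t_j}-r_1-\alpha(1-t_j)(2r_1+1)$ is uniformly bounded in $C^0$. Upgrading this to a uniform H\"older bound on $G_t$ (the root depends smoothly on the coefficients) and applying Schauder estimates with bootstrapping, $f_{t_j}$ is uniformly bounded in every $C^k$; by Arzela--Ascoli a subsequence converges in $C^\infty$ to $f_{t_\infty}$, which solves \eqref{01} at $t_\infty$ and satisfies $\Delta f_{t_\infty}+r_1+\alpha(1-t_\infty)(2r_1+1)=\lim_j G_{t_j}>0$. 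Thus $t_\infty\in S$, so $S$ is closed; being nonempty, open, and closed in $[0,1]$ it equals $[0,1]$, which produces the claimed smooth family $(f_t,\psi_t)$ with the stated positivity for all $t\in[0,1]$.

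The step I expect to be the main obstacle is openness, and within it the propagation of the scalar constraint $\int_\Sigma\big(G_t-r_1-\alpha(1-t)(2r_1+1)\big)\,\omega_\Sigma=0$ along the deformation (equivalently, that the distinguished positive root $G_t$ has $\omega_\Sigma$‑average exactly $r_1+\alpha(1-t)(2r_1+1)$); once that is available, the $C^k$ a priori estimates and the survival of the positivity $\Delta f_t+r_1+\alpha(1-t)(2r_1+1)>0$ in the limit are comparatively routine, thanks to the explicit quadratic structure.
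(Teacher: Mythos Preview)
Your overall strategy mirrors the paper's: exploit the decoupling, fix $\psi_t$ by solving \eqref{02}, then run the continuity method in $t$ on \eqref{01} for $f_t$. Your observation that \eqref{02} has no explicit $t$-dependence, so that $\psi_t\equiv\psi_0$ with $f_0=-\tfrac12\psi_0$, is correct and neater than the paper's formal continuity argument for $I_1'$. Your a~priori estimates and the closedness argument (uniform two-sided bounds on $\Delta f_t$ from the quadratic structure, then Schauder and Arzel\`a--Ascoli, with strict positivity surviving in the limit because equality at a point would contradict \eqref{01}) agree with the paper's Lemmas~\ref{011}--2.6 and the paragraph that follows them.

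The genuine gap is precisely the one you flag: the scalar solvability constraint $\int_\Sigma G_t\,\omega_\Sigma=c_t\,\mathrm{Vol}(\Sigma)$ for your Poisson equation, equivalently the one-dimensional cokernel of the linearization $h(x)\,\Delta$ acting from mean-zero $C^{2,\beta}$ functions to $C^{0,\beta}$. You do not verify it, and the suggested Chern--Weil-type identity is not supplied. The paper does \emph{not} isolate this constraint: for openness of $I_1''$ it computes $DT_1(f,t)[\delta f]=h(x)\,\Delta\delta f$ with $h>0$ (this is exactly your coefficient $b_t\big((a+c)d_t+q\big)$), and then simply asserts that $DT_1(f,t)$ is an isomorphism and invokes the implicit function theorem on $\mathcal{B}\times[0,1]\to C^{0,\beta}$. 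So the paper's route through openness is a direct IFT on the nonlinear map $T_1$ rather than your explicit root-and-Poisson reduction, but at the level of what must actually be checked the two approaches collide at the same point: surjectivity of $h\,\Delta$ onto all of $C^{0,\beta}$, which the paper claims without further argument and which your integral condition makes explicit. If you can establish that identity your proof is complete and, apart from the more transparent reduction to a single Poisson equation, essentially equivalent to the paper's.
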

If we choose $\lambda=0$ and $\mu=0$, then the system \eqref{dem2} for the Vortex bundle will be the following coupled system of equations.
\begin{equation}\label{1}
\begin{split}
&\Big\{ \Big( \Delta f_t +\Delta {\psi}_t +(r_1+1)+ \alpha (1-t)(2r_1+1)\Big) \Big( 2r_2 + |\phi|_{g_t}^2 +  \\
& \alpha (1-t)(4r_2+2) \Big) \Big( \Delta f_t +  r_1+ \alpha (1-t)(2r_1+1)\Big) \\
& \Big( (2r_2+2)- |\phi|_{g_t}^2 + \alpha (1-t)(4r_2+2) \Big)\Big\} \\
& + \sqrt{-1} \frac{\nabla^{1,0} \phi \wedge \nabla^{0,1} \phi^{\dag_{g_t}}}{\omega_{\Sigma}} \Big( 2r_2 + |\phi|_{g_t}^2 + \alpha (1-t)(4r_2+2) \Big) \\
 & \Big( \Delta f_t + r_1+ \alpha (1-t)(2r_1+1)\Big)\\
  = &\Big\{ \Big(\Delta f_0 + \Delta \psi_0+ (r_1 + 1 ) + \alpha (2r_1+1) \Big)\Big( 2r_2+ |\phi|_{g_0}^2 + \alpha (4r_2+2) \Big) \\
 & \Big( (2r_2+2)- |\phi|_{g_0}^2 + \alpha (4r_2+2) \Big) \Big( \Delta f_0 + r_1+ \alpha (2r_1+1)\Big) \Big\}  \\
 & + \sqrt{-1} \frac{\nabla^{1,0} \phi \wedge \nabla^{0,1}\phi^{\dag_{g_0}}}{\omega_{\Sigma}}\Big( 2r_2 + |\phi|_{g_0}^2 + \alpha (4r_2+2) \Big) \\ 
 & \Big( \Delta f_0 + r_1+ \alpha (2r_1+1)\Big),
 \end{split}
\end{equation}
\begin{equation}\label{2}
\begin{split}
&\Big\{ 2\Big( 2\Delta f_t +\Delta \psi_t +(1+ 2\alpha (1-t))(2r_1+1) \Big)\big( |\phi|_{g_t}^2 -1 \big)\\
& +\big( \Delta \psi_t +1 \big)\big(1+ 2\alpha (1-t)\big)(4r_2+2)\Big\}\\
 &\qquad \qquad =2(2r_1 + 1)(4r_2 + 2)(2\alpha +1) \varepsilon \psi_t,
 \end{split}
\end{equation}
where $h_0=\pi_{1}^*(e^{-(f_0 + \psi_0)}k^{r_1+1}) \tensor \pi_{2}^*(h_{FS}^{2r_2}) \oplus \pi_{1}^*(e^{-f_0}k^{r_1}) \tensor \pi_{2}^*(h_{FS}^{2r_2+2})$ is the solution of the equation \eqref{2} at $t=0$, satisfying $det h_0= det H_0$. The existence of such $h_0$ is discussed in subsection \ref{initial2} and $\alpha >0$ is a large enough constant so that $\sqrt{-1}\Theta_{h_0} + \alpha \omega_0 \tensor Id_{E^*} >0$ in the sense of Nakano and $\Delta f_0 + r_1 + \alpha (2r_1+1)>0$. Finally, we have the following result.

\begin{thm}\label{main2}
 For large enough $\varepsilon>0$, the system defined by equations \eqref{1} and \eqref{2} has smooth solution $(f_t, \psi_t)$ such that $\Delta f_t + r_1 + \alpha (1-t)(2r_1+1) >0$ for all $t\in [0,1].$
 \end{thm}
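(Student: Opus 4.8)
Although \eqref{1}--\eqref{2} is a coupled system, the first step is to reduce it to a single scalar equation for $\psi_t$. Write $c_t:=\Delta f_t+r_1+\alpha(1-t)(2r_1+1)$ (the quantity whose positivity is asserted), $a_t:=c_t+\Delta\psi_t$, $b_t:=2r_2+|\phi|_{g_t}^2+\alpha(1-t)(4r_2+2)$, $d_t:=(2r_2+2)-|\phi|_{g_t}^2+\alpha(1-t)(4r_2+2)$, and $P_t:=\sqrt{-1}\,\nabla^{1,0}\phi\wedge\nabla^{0,1}\phi^{\dagger_{g_t}}/\omega_\Sigma\ge 0$. Then, at each point of $\Sigma$, \eqref{1} is the quadratic $b_td_t\,c_t^2+(b_td_t\Delta\psi_t+P_tb_t)\,c_t=R_0$, where $R_0>0$ is the fixed right-hand side built from $h_0$ (positive because $\sqrt{-1}\Theta_{h_0}+\alpha\omega_0\tensor Id_{E^*}>0$ in the sense of Nakano). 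Since $b_t,d_t>0$ as long as $|\phi|_{g_t}^2$ stays bounded, $c_t$ must be the unique positive root, $c_t=F_t(\psi_t,\nabla\psi_t,\Delta\psi_t,x)$; in particular the sign condition $c_t>0$ in the statement is automatic, and $b_tc_t(d_ta_t+P_t)=R_0>0$ forces $d_ta_t+P_t>0$. Substituting $\Delta f_t=F_t-r_1-\alpha(1-t)(2r_1+1)$, a short manipulation rewrites \eqref{2} as
\[
2(2c_t+\Delta\psi_t+1)(|\phi|_{g_t}^2-1)+(\Delta\psi_t+1)(1+2\alpha(1-t))(4r_2+2)=2(2r_1+1)(4r_2+2)(2\alpha+1)\varepsilon\,\psi_t,
\]
a single fully nonlinear second-order scalar equation $\mathcal E_t(\psi_t)=0$ on $\Sigma$; once $\psi_t$ is found, $f_t$ is recovered from $\Delta f_t=F_t-r_1-\alpha(1-t)(2r_1+1)$ and normalised by $\int_\Sigma f_t\,\omega_\Sigma=0$. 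Differentiating, the principal coefficient of the linearisation of $\mathcal E_t$ equals $2(|\phi|_{g_t}^2-1)+(1+2\alpha(1-t))(4r_2+2)-4d_tc_t(|\phi|_{g_t}^2-1)/(d_t(a_t+c_t)+P_t)$, whose denominator is positive (as $d_ta_t+P_t>0$) and whose last term is nonnegative when $|\phi|_{g_t}^2<1$, so it is $\ge 4r_2>0$: the equation is uniformly elliptic, with zeroth-order coefficient $-2(2r_1+1)(4r_2+2)(2\alpha+1)\varepsilon<0$.

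Now run the continuity method: let $S\subseteq[0,1]$ be the set of $t$ for which $\mathcal E_t(\psi_t)=0$ has a smooth solution with $|\phi|_{g_t}^2<1$. Then $0\in S$, since $h_0$ solves \eqref{2} at $t=0$ by construction, \eqref{1} holds identically there, and $|\phi|_{g_0}^2<1$ can be arranged for the initial metric (constructed as in the relevant subsection below). For openness, at $t_0\in S$ the linearisation $D\mathcal E_{t_0}$ is a second-order elliptic operator on the compact surface $\Sigma$ with strictly negative zeroth-order term, hence injective by the maximum principle; being Fredholm of index zero it is an isomorphism $C^{k+2,\gamma}(\Sigma)\to C^{k,\gamma}(\Sigma)$, and the implicit function theorem gives smooth $\psi_t$ for $t$ near $t_0$. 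The conditions $c_t>0$ (automatic) and $|\phi|_{g_t}^2<1$ are open, so a neighbourhood of $t_0$ lies in $S$, and reconstructing $f_t$ keeps \eqref{1}--\eqref{2} solved.

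The heart of the proof is closedness, i.e.\ uniform a priori estimates for $\psi_t$ along $S$. The crucial step is a uniform $C^0$ bound on $\psi_t$: integrating $\mathcal E_t(\psi_t)=0$ over $\Sigma$ (where $\int_\Sigma\Delta(\cdot)\,\omega_\Sigma=0$, and where $d_ta_t+P_t>0$ and the bound it entails on $a_t+c_t$ are used) estimates the average of $\psi_t$, after which the maximum principle at the extrema of $\psi_t$ — at which $\Delta\psi_t$ has a definite sign — controls its oscillation. The difficulty is precisely the coupling: \eqref{2} contains $\Delta f_t$, equivalently $c_t$, which is not a priori controlled; but since the zeroth-order term of $\mathcal E_t$ carries the large factor $\varepsilon$, for $\varepsilon$ sufficiently large all contributions coming from $c_t$ are absorbed, and one obtains $\|\psi_t\|_{C^0}\le C$ uniformly in $t\in S$, and along the way that $|\phi|_{g_t}^2$ stays $<1$ (so ellipticity is not lost) and $c_t$ stays bounded. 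With ellipticity and the $C^0$ bound in hand, the higher-order estimates are standard: Krylov--Safonov / De Giorgi--Nash--Moser gives a uniform $C^{\gamma}$ bound, Evans--Krylov together with Schauder estimates upgrades this to a uniform $C^{2,\gamma}$ bound, and bootstrapping the differentiated equation yields uniform $C^{k,\gamma}$ bounds for all $k$. Arzela--Ascoli then extracts from any sequence $t_n\to t_\infty$ in $S$ a smooth solution at $t_\infty$, so $S$ is closed.

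Being nonempty, open and closed in the connected interval $[0,1]$, $S=[0,1]$; at $t=1$ this gives a smooth solution $(f_1,\psi_1)$ with $\Delta f_1+r_1>0$, which is the assertion. The main obstacle, and the reason for the hypothesis that $\varepsilon$ be large, is exactly the $C^0$ estimate for $\psi_t$: unlike in Theorem \ref{main1}, where \eqref{02} involves $\psi_t$ alone, here \eqref{2} contains $\Delta f_t$, so $\psi_t$ and $c_t$ must be estimated simultaneously, and only a large zeroth-order coefficient in \eqref{2} lets the maximum-principle term dominate this coupling.
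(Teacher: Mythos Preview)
Your idea of eliminating $f_t$ by solving \eqref{1} for $c_t$ as the positive root of a quadratic, and substituting into \eqref{2} to obtain a single scalar equation $\mathcal E_t(\psi_t)=0$, is a legitimate alternative to the paper's direct treatment of the coupled system; one genuine payoff is that the index of a scalar second-order elliptic operator on a compact surface is automatically zero, whereas the paper must build a homotopy $T^s$ through elliptic systems to compute the index of the $2\times 2$ linearization $DT$.

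There is, however, a real gap, and it concerns where large $\varepsilon$ enters. Your claim that the zeroth-order coefficient of $D\mathcal E_t$ is exactly $-2(2r_1+1)(4r_2+2)(2\alpha+1)\varepsilon$ is not correct: since $|\phi|_{g_t}^2=e^{-\psi_t}|\phi|_k^2$, the quantities $b_t,d_t,P_t$ and hence $c_t$ all depend on $\psi_t$, and differentiating produces further zeroth- and first-order terms. The paper carries out the analogous elimination at the linearized level (solving $DT_2=0$ for $\Delta\delta f$ and substituting into $DT_1=0$, yielding equation \eqref{19}), and the maximum principle there requires in particular $\Delta\psi_t+1+2(2r_1+1)(4r_2+2)(2\alpha+1)\varepsilon>0$. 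This is where large $\varepsilon$ is actually used, and it works only because one first proves an $\varepsilon$-\emph{independent} lower bound $\Delta\psi_t\ge -\tilde C$ (Lemma \ref{11}, which in turn rests on $\|\varepsilon\psi_t\|_{C^0}\le C$ from Lemma \ref{6} and the $\varepsilon$-independent initial estimate Lemma \ref{i5}). Your proposal inverts this: you say large $\varepsilon$ is needed to absorb $c_t$ in the $C^0$ estimate and that openness holds for any $\varepsilon>0$. But if the closedness bounds depended on $\varepsilon$, you could not choose $\varepsilon$ uniformly along the path to dominate the omitted zeroth-order terms in $D\mathcal E_t$. The delicate step is obtaining $\varepsilon$-free a priori bounds on $\psi_t$ and $\Delta\psi_t$ by playing \eqref{1} and \eqref{2} against each other, and only then invoking large $\varepsilon$ in the linearized kernel computation.
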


\indent Demailly's original approach involved the method of continuity. However, proving openness for Demailly's systems is the most challenging part because the required positivity properties for openness may not be preserved along the continuity path. Even in the case of a direct sum of ample line bundles on a Riemann surface, it appears hard to prove, and therefore the Leray-Schauder degree method was used in \cite{directsum}. The main point of this article is to provide a proof-of-concept for Demailly's approaches. We hope that the techniques used for the vortex bundle generalize to more complicated situations. \\

\indent We briefly describe the strategy of the proofs. System \eqref{dem1} is decoupled and hence is relatively easier to handle (Section \ref{sec:first}). On the other hand, unlike System \eqref{dem1}, System \eqref{dem2} is truly a coupled system. To demonstrate openness, the key point is to prove the lower bound for $ \Delta \psi_t$, independent of $t$ and $\varepsilon$ so that $\Delta \psi_t + 1 + 2(2r_1+1)(4r_2+2)(2\alpha + 1) \varepsilon$ can be made positive for large $\varepsilon$. However, as one will see, to get such estimates, it is crucial to observe that the lower bound of $\Delta \psi_0$ itself is independent of $\varepsilon$. These calculations are rather delicate and carried out in Section \ref{sec:second}.
\section{Proof of Theorem \ref{main1} }\label{sec:first}
For the remainder of the paper, we drop the parameter $t$ for notational convenience. We denote constants by $C$ that may vary from line to line and are independent of $t$ unless specified.

\subsection{  \texorpdfstring{Existence of solution at $t=0$ for the first system}{\texttwoinferior} }\label{initial1}
Recall that $h_0 = \pi_{1}^*(e^{-(f_0 + \psi_0)}k^{r_1+1}) \tensor \pi_{2}^*(h_{FS}^{2r_2}) \oplus \pi_{1}^*(e^{-f_0}k^{r_1}) \tensor \pi_{2}^*(h_{FS}^{2r_2+2})$ is the solution of the equation \eqref{02} at $t=0$, and $det h_0= det H_0$. Therefore, $2f_0 + \psi_0=0$ and $\psi_0$ satisfying $(2r_1+1)\big( |\phi|_{g_0}^2 -1 \big) +\big( \Delta \psi_0 +1 \big)(2r_2+1)= (2r_1 + 1)(4r_2 + 2) \psi_0$. We shall show that $\psi_0$ exists by the method of continuity. Now define $L_s(\psi_0)=\Delta \psi_0 + 1 - s(1-|\phi|_{g_0}^2)\frac{2r_1+1}{2r_2+1}-2(2r_1+1)\psi_0$, where $s \in [0,1]$. Let $S:=\{ s \in [0,1] | L_s=0 \text{ has a smooth solution at } s \}$. Clearly $\psi_0=\frac{1}{2(2r_1+1)}$ is a solution of $L_0=0$. Thus $0 \in S$. Now $DL_s(\psi_0)[\delta \psi]=\Delta \delta \psi - s|\phi|^2_{g_0}\frac{2r_1+1}{2r_1+1}\delta \psi - 2(2r_1+1)\delta \psi$. By maximum principle we get $Ker(DL_s)=\{ 0\}$. Since $DL_s$ is formally self-adjoint, we get $DL_s$ is an isomorphism. Hence by implicit function theorem for Banach manifolds, we get $S$ is open. Now we must prove that $S$ is closed. Let us first prove a prior bound on the term $|\phi|_{g_0}^2$.
\begin{lem}\label{i2}
    $|\phi|_{g_0}^2<1$.
\end{lem}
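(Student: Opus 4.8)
The plan is to prove this by the maximum principle, applied not to $\psi_0$ itself but to the function $u:=|\phi|_{g_0}^2=e^{-\psi_0}|\phi|_k^2$ on $\Sigma$. If $\phi\equiv 0$ there is nothing to prove, so assume $\phi\not\equiv 0$ and fix a solution $\psi_0$ of $L_s(\psi_0)=0$ for some $s\in[0,1]$ (in particular this covers $s=1$, i.e.\ \eqref{02}). Argue by contradiction: suppose $\max_\Sigma u\ge 1$. Since $u\ge 0$, this maximum is attained at a point $p$ with $u(p)>0$, hence $\phi(p)\neq 0$; thus $\log u$ is smooth near $p$ and $p$ is a local maximum of $\log u$.

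First I would compute $\sqrt{-1}\partial\bar\partial\log u$ near $p$. Writing $\log u=-\psi_0+\log|\phi|_k^2$ and using that $\sqrt{-1}\partial\bar\partial\log|\phi|_k^2=-\sqrt{-1}\Theta_k=-\omega_{\Sigma}$ on the open set where $\phi\neq 0$ (Poincar\'e--Lelong, or simply that $\log|f|^2$ is pluriharmonic away from the zeros of a local holomorphic representative $f$ of $\phi$), one gets $\Delta\log u=-\Delta\psi_0-1$, where throughout $\Delta v$ denotes the trace of $\sqrt{-1}\partial\bar\partial v$ with respect to $\omega_{\Sigma}$ --- the normalisation already in force in \eqref{01}--\eqref{02}. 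Substituting the continuity equation $L_s(\psi_0)=0$, that is, $\Delta\psi_0=-1+s\tfrac{2r_1+1}{2r_2+1}(1-u)+2(2r_1+1)\psi_0$, yields $\Delta\log u=-2(2r_1+1)\psi_0-s\tfrac{2r_1+1}{2r_2+1}(1-u)$ near $p$.

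Then the maximum principle gives $0\ge\Delta\log u(p)=-2(2r_1+1)\psi_0(p)-s\tfrac{2r_1+1}{2r_2+1}\bigl(1-u(p)\bigr)$. Since $u(p)\ge 1$ and $s\ge 0$, the last term is $\ge 0$, so this forces $\psi_0(p)\ge 0$, hence $e^{-\psi_0(p)}\le 1$. But then $u(p)=e^{-\psi_0(p)}|\phi|_k^2(p)\le|\phi|_k^2(p)\le\tfrac12<1$, contradicting $u(p)\ge 1$. Therefore $u<1$ everywhere on $\Sigma$, which is the assertion.

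I do not expect a genuine obstacle here; the two points that need a little care are (i) applying the maximum principle to $u$ rather than to $\psi_0$ --- the latter only yields an upper bound on $\psi_0$, not directly on $u$ --- and (ii) the harmlessness of the zeros of $\phi$, which is automatic since any point where $u\ge 1$ is necessarily a point where $\phi\neq 0$, so the logarithmic computation is legitimate there. It is worth noting that the rescaling $|\phi|_k^2\le\tfrac12$ is exactly what makes the final inequality strict; $|\phi|_k^2\le 1$ alone would not close the argument.
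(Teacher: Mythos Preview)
Your argument is correct and follows the same overall strategy as the paper --- apply the maximum principle at the maximum point $p$ of $|\phi|_{g_0}^2$, deduce $(1+\Delta\psi_0)(p)\ge 0$, and then use the equation together with $|\phi|_k^2\le\tfrac12$ to force $|\phi|_{g_0}^2(p)<1$. The details, however, are organised a bit differently. The paper works directly with $|\phi|_{g_0}^2$ and the Bochner-type identity $\partial\bar\partial|\phi|_{g_0}^2=-\Theta_{g_0}|\phi|_{g_0}^2+\nabla^{1,0}\phi\wedge\nabla^{0,1}\phi^{\dag_{g_0}}$, obtaining $\sqrt{-1}\Theta_{g_0}(p)\ge 0$ and then splitting into the two cases $\psi_0(p)\ge 0$ and $\psi_0(p)<0$, the second case being handled via the equation. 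You instead pass to $\log u$ and invoke Poincar\'e--Lelong to get $\Delta\log u=-\Delta\psi_0-1$ directly, then run a contradiction argument that lands immediately on $\psi_0(p)\ge 0$ without a case split. Both routes reach the same key inequality $(1+\Delta\psi_0)(p)\ge 0$; your packaging is slightly cleaner (no $|\nabla\phi|^2$ term to discard, no case analysis), and you also carry out the argument for every $s\in[0,1]$ along the continuity path, which is exactly what the subsequent closedness step needs --- the paper's proof is phrased only for equation~\eqref{02}, i.e.\ $s=1$, though the same computation works verbatim for all $s$.
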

\begin{proof}
     We know
\begin{equation}\label{05}
\nonumber \partial \bar{\partial}|\phi|_{g_0}^2=-\Theta_{g_0}|\phi|^2_{g_0}+\nabla^{1,0} \phi \wedge \nabla^{0,1} \phi^{\dag_{g_0}}.
\end{equation}
At the point $p$ of maximum of $|\phi|_{g_0}^2$, $\sqrt{-1} \partial \bar{\partial}|\phi|_{g_0}^2(p) \leq 0$. Thus we see that $\sqrt{-1} \Theta_{g_0}(p) \geq 0$, which implies $\big(1+ \Delta \psi_0 \big) (p) \geq 0$. If $\psi_0(p) \geq 0$, we have $|\phi|^2_{g_0}(p)=|\phi|^2_{k}(p)e^{-\psi_0(p)} \leq |\phi|^2_{k}(p) \leq \frac{1}{2}$. Otherwise $\psi_0(p)<0$, and equation \eqref{02} implies $(2r_1+1)\big( |\phi|_{g_0}^2 -1 \big)<0$. Hence, We get $|\phi|^2_{g_0}<1$.
\end{proof}
Let $s \in S$, then $\Delta \psi_0 + 1 = s(1-|\phi|_{g_0}^2)\frac{2r_1+1}{2r_2+1}-2(2r_1+1)\psi_0$. Applying maximum principle and Lemma \ref{i2} we get $||\psi_0||_{C^0}<C$ and hence $||\Delta \psi_0||_{C^0}<C$. Now by the Arzela-Ascoli theorem, we see that $S$ is closed. This completes the proof of the existence of $h_0$.

\begin{proof}[Theorem \ref{main1}]
Since the system is decoupled, we shall solve the equation \eqref{02} for $\psi$ using the continuity method. Then we shall solve equation \eqref{01} using the same method. Let 
\begin{align*}
    I^{\prime}_1 :=  \Big\{ t \in [0,1] \Big| \text{ equation \eqref{02} has smooth solution $\psi_t$ at $t$ } \Big\}.
\end{align*}
Since $\psi_0$ solves the equation at $t=0$, so $0 \in I_1^{\prime}$. Hence $I^{\prime}_1$ is non-empty. We want to show $I^{\prime}_1$ is closed and open. Then by connectedness  we can conclude that $I^{\prime}_1=[0,1]$.\\
 
\subsection{ Closedness of \texorpdfstring{$I^{\prime}_1$}{\texttwoinferior} }
If we prove suitable uniform estimates for $\psi_t$ whenever $t \in I^{\prime}_1$, then using the Arzela-Ascoli theorem we can prove that $I^{\prime}_1$ is closed.\\

 A similar calculation as in Lemma \ref{i2} shows the following estimates.

\begin{lem}\label{04}
 $|\phi|^2_{g} <1 $  for all $t \in [0,1].$
\end{lem}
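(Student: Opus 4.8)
The plan is to run the maximum-principle argument of Lemma \ref{i2} essentially unchanged, now with the time-dependent data $g = g_t = e^{-\psi_t}k$ in place of $g_0$ and with the general equation \eqref{02} (whose coefficients do not involve $t$). The starting point is the Bochner-type identity
\[
\partial\bar\partial|\phi|_g^2 = -\Theta_g|\phi|_g^2 + \nabla^{1,0}\phi\wedge\nabla^{0,1}\phi^{\dag_g},
\]
valid for any Hermitian metric on $L$ and the fixed holomorphic section $\phi$, together with the relation $\sqrt{-1}\Theta_{g_t} = \omega_\Sigma + \sqrt{-1}\partial\bar\partial\psi_t$, so that $\sqrt{-1}\Theta_{g_t}/\omega_\Sigma = 1 + \Delta\psi_t$.

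Let $p\in\Sigma$ be a point where $|\phi|_g^2$ attains its maximum. If $|\phi|_g^2(p)=0$ the claim is trivial, so assume $|\phi|_g^2(p)>0$. Evaluating the identity at $p$, using $\sqrt{-1}\partial\bar\partial|\phi|_g^2(p)\le 0$ and the non-negativity of $\sqrt{-1}\,\nabla^{1,0}\phi\wedge\nabla^{0,1}\phi^{\dag_g}$, one gets $\sqrt{-1}\Theta_g(p)\ge 0$, i.e. $(1+\Delta\psi_t)(p)\ge 0$. Now split on the sign of $\psi_t(p)$: if $\psi_t(p)\ge 0$, then $|\phi|_g^2(p)=|\phi|_k^2(p)e^{-\psi_t(p)}\le |\phi|_k^2(p)\le \tfrac12$; if $\psi_t(p)<0$, evaluating \eqref{02} at $p$ shows that $(2r_1+1)(4r_2+2)\psi_t(p)<0$ while $(\Delta\psi_t+1)(2r_2+1)(p)\ge 0$, forcing $(2r_1+1)(|\phi|_g^2-1)(p)<0$. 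In either case $|\phi|_g^2(p)<1$, and since $p$ is a global maximum this gives $|\phi|_g^2<1$ on $\Sigma$. The argument applies verbatim for every $t\in I_1^{\prime}$, hence for all $t\in[0,1]$ once closedness is established.

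There is no genuine obstacle here: the proof is a direct transcription of Lemma \ref{i2}. The only point that requires attention is, as before, the branch $\psi_t(p)<0$, where the pointwise bound $|\phi|_k^2\le\tfrac12$ is unavailable and one must instead feed the inequality $(\Delta\psi_t+1)(p)\ge 0$ into the PDE \eqref{02}; and since the coefficients of \eqref{02} are independent of $t$, the conclusion $|\phi|_g^2<1$ is automatically uniform in $t$.
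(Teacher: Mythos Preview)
Your proof is correct and follows exactly the approach the paper intends: the paper simply states that ``a similar calculation as in Lemma~\ref{i2}'' yields the result, and what you have written is precisely that calculation carried out for general $t$, using the Bochner identity, the maximum principle, and the case split on the sign of $\psi_t(p)$ together with equation~\eqref{02}. Your added observation that the coefficients of~\eqref{02} are independent of $t$, so the bound is automatically uniform, is a helpful clarification.
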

Now let us prove $C^0$-estimates for $\psi_t$.
\begin{lem}\label{06}
If $ t \in I_1^{\prime}$, then there exists a constant $C$ such that $||\psi_t||_{C^0} \leq C$.
\end{lem}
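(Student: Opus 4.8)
The plan is to read the $C^0$-bound directly off equation \eqref{02} using the maximum principle, exactly as was done for $\psi_0$ at the end of subsection \ref{initial1}. First I would divide \eqref{02} by $2r_2+1$ and rearrange it into the form
\begin{equation*}
\Delta \psi_t - 2(2r_1+1)\psi_t = -1 + \frac{2r_1+1}{2r_2+1}\big(1 - |\phi|_{g_t}^2\big),
\end{equation*}
so that the zeroth-order coefficient $-2(2r_1+1)$ is strictly negative and, by Lemma \ref{04}, the right-hand side $F_t := -1 + \tfrac{2r_1+1}{2r_2+1}\big(1-|\phi|_{g_t}^2\big)$ satisfies $-1 < F_t \le -1 + \tfrac{2r_1+1}{2r_2+1}$ pointwise, with bounds depending only on $r_1,r_2$.

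Next I would evaluate this identity at the extrema of $\psi_t$ on the compact surface $\Sigma$. At a maximum point $p$ of $\psi_t$ one has $\Delta\psi_t(p)\le 0$, hence $-2(2r_1+1)\psi_t(p) = F_t(p) - \Delta\psi_t(p) \ge F_t(p) > -1$, which gives $\psi_t(p) < \tfrac{1}{2(2r_1+1)}$. At a minimum point $q$ one has $\Delta\psi_t(q)\ge 0$, hence $-2(2r_1+1)\psi_t(q) = F_t(q) - \Delta\psi_t(q) \le F_t(q) \le -1 + \tfrac{2r_1+1}{2r_2+1}$, which gives $\psi_t(q) \ge \tfrac{r_2 - r_1}{(2r_1+1)(2r_2+1)}$. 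Taking $C$ to be the larger of these two absolute bounds yields $\|\psi_t\|_{C^0}\le C$.

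There is no serious obstacle in this step: it is an elementary application of the maximum principle, and the only external ingredient is the a priori inequality $0\le |\phi|_{g_t}^2 < 1$ from Lemma \ref{04}. The one thing worth noting is that the constant is genuinely uniform in $t$, which is automatic since the coefficients of \eqref{02} are $t$-independent and the bound on $|\phi|_{g_t}^2$ is uniform. Once Lemma \ref{06} is established, feeding it back into the displayed identity gives $\|\Delta\psi_t\|_{C^0}\le C$, and then elliptic and Schauder estimates promote this to $C^{k,\gamma}$-bounds on $\psi_t$; the genuinely delicate analysis of this section is not for $\psi_t$ but for $f_t$ in equation \eqref{01}, where one must control $f_t$ and $\Delta f_t$ along the continuity path while maintaining the positivity $\Delta f_t + r_1 + \alpha(1-t)(2r_1+1) > 0$.
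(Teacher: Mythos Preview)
Your proposal is correct and follows essentially the same approach as the paper: both apply the maximum principle to equation \eqref{02}, evaluating at the maximum and minimum of $\psi_t$ and using the uniform bound $0\le|\phi|_{g_t}^2<1$ from Lemma \ref{04} to extract $t$-independent upper and lower bounds on $\psi_t$. Your rearranged form and explicit constants are slightly sharper (the paper records the weaker lower bound $\psi_t(q)\ge -\tfrac{1}{4r_2+2}$), but the argument is identical in substance.
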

\begin{proof}
Suppose $\psi$ attains its maximum at a point $p$, then $\Delta \psi(p) \leq 0$. Using this along with equation \eqref{02}, we conclude the following inequality 
\begin{eqnarray}\label{07}
(2r_1 + 1)(4r_2 + 2) \psi (p)\leq (2r_2+1).
\end{eqnarray}
At a minimum point $q$ for $\psi$ we have $\Delta \psi (q)\geq 0.$ Equation $\eqref{02}$ gives
\begin{eqnarray}\label{08}
  && -(2r_1+1) \leq (2r_1 + 1)(4r_2 + 2) \psi(q).
\end{eqnarray}
\end{proof}
Applying Lemma \ref{06} in equation \eqref{02}, we have the following estimate.
\begin{lem}\label{09}
Any smooth $\psi$ that solves the equation (\ref{02}) satisfies $|| \Delta \psi ||_{C^0} \leq C$ for some positive constant $C$.
\end{lem}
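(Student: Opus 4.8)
The plan is to obtain the bound by solving the scalar equation \eqref{02} algebraically for $\Delta\psi$ and then invoking the estimates already in place. Rearranging \eqref{02} gives
\[
(2r_2+1)\,\Delta\psi \;=\; (2r_1+1)(4r_2+2)\,\psi \;-\;(2r_1+1)\bigl(|\phi|_g^2-1\bigr)\;-\;(2r_2+1),
\]
so it is enough to bound each term on the right uniformly in $t$. The first term is controlled by the $C^0$-estimate $\|\psi\|_{C^0}\le C$ of Lemma~\ref{06}. The second term is bounded since $0\le |\phi|_g^2<1$ by Lemma~\ref{04} --- in fact $|\phi|_g^2 = e^{-\psi}|\phi|_k^2\ge 0$ trivially, and the upper bound from Lemma~\ref{04} (or even just $|\phi|_k^2\le\tfrac12$ combined with the bound on $\psi$) does the rest. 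The last term is a fixed constant. Dividing through by $2r_2+1>0$ yields $\|\Delta\psi\|_{C^0}\le C$.

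The point I would emphasize is that the resulting constant depends only on $r_1,r_2$ and on the constant furnished by Lemma~\ref{06}, hence is independent of $t\in I_1'$; this uniformity is exactly what the closedness argument for $I_1'$ requires. I do not expect any genuine obstacle here: the analytic content lies entirely in Lemmas~\ref{04} and \ref{06}, and the bound on $\Delta\psi$ is then immediate from the linear structure of \eqref{02} in $\Delta\psi$. (Once $\|\Delta\psi\|_{C^0}\le C$ is in hand, standard $L^p$ and Schauder estimates for the Laplacian on the compact surface $\Sigma$ promote it to uniform $C^{k,\alpha}$-bounds on $\psi$, which then feeds the Arzela--Ascoli argument for closedness of $I_1'$; but that elliptic bootstrap is logically separate from the present lemma.)
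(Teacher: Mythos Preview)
Your proof is correct and matches the paper's approach exactly: the paper simply states ``Applying Lemma~\ref{06} in equation~\eqref{02}, we have the following estimate,'' and you have spelled out precisely this algebraic rearrangement. The only difference is that you have made the dependence on Lemma~\ref{04} explicit, which the paper leaves implicit.
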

\subsection{Openness of \texorpdfstring{$I^{\prime}_1$}{\texttwoinferior}}
Let us define the map $T_2: C^{2,\beta} \times [0,1] \rightarrow  C^{0,\beta}$  by $T_2\Big( \psi , t \Big)= (2r_1+1)\big( |\phi|_{g}^2 -1 \big) +\big( \Delta \psi +1 \big)(2r_2+1) - (2r_1 + 1)(4r_2 + 2) \psi$. Its linearization is
\begin{equation}\label{010}
\begin{split}
& DT_2(\psi, t)[\delta \psi]\\ 
& = (2r_2+1)\Delta \delta \psi -(2r_1+1)|\phi|^2_{g} \delta \psi -(2r_1+1)(4r_2+2)\delta \psi.
\end{split}
\end{equation}
If $\delta \psi \in Ker(DT_2)$, then $DT_2(\psi)[\delta \psi]=0$. Now maximum principle implies that $\delta \psi =0$. Since $T_2$ is self-adjoint and $Ker(DT_2)=0$, it is an isomorphism.\\
 From Lemma \ref{06}, Lemma \ref{09}, and by bootstrapping, we conclude that solutions are smooth. Therefore, we have uniform estimates of solutions of the equation \eqref{02} and its derivative of all orders. \\
 
 \indent Now we can solve the equation \eqref{01} for the variable $f$. As we mentioned earlier, we shall do so by the continuity method.
We let 
\begin{align*}
     &&I_1^{\prime \prime} := \Big\{ t \in [0,1]  \text{ \Big| equation (\ref{01}) has smooth  solution $ f_t $ at $t$ and }\\
    && \textbf{ $\Delta f_t + r_1 + \alpha (1-t)(2r_1+1) >0$ } \Big\}.
\end{align*}
At $t=0$, $f_0$ is a solution of equation \eqref{01} and $\Delta f_0 + r_1 + \alpha (1-t)(2r_1+1) >0$, so $0 \in I_1$. Hence $I_1$ is non-empty. We must show that $I_1^{\prime \prime}$ is both open and closed.

\subsection{ Closedness of \texorpdfstring{$I^{\prime \prime}_1$}{\texttwoinferior}}
Let us prove the closedness of $I_1^{\prime \prime}$ by proving estimates for $f$ and its derivatives.\\
 
\begin{lem}\label{011}
There exists a constant $C$ such that whenever $t \in I^{\prime \prime}_1$, we have $||\Delta f_t||_{C^0} \leq C$.
\end{lem}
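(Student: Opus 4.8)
The plan is to treat \eqref{01} as a pointwise algebraic identity on $\Sigma$ for the single scalar $c_t := \Delta f_t + r_1 + \alpha(1-t)(2r_1+1)$, exploiting that everything else occurring in it has already been bounded uniformly in $t$. Indeed, the work of the previous subsections gives $||\psi_t||_{C^k} \leq C$ independently of $t$, hence uniform bounds on $\Delta\psi_t$ and on $P_t := \sqrt{-1}\,\nabla^{1,0}\phi \wedge \nabla^{0,1}\phi^{\dag_{g_t}}/\omega_{\Sigma}$, which is moreover $\geq 0$ pointwise; Lemma \ref{04} gives $|\phi|^2_{g_t} < 1$; and the right-hand side of \eqref{01}, which only involves the $t=0$ data, is a fixed smooth positive function $\mathcal{F}$ on $\Sigma$ (its positivity reflecting the standing hypothesis $a_0>0$), hence bounded between two positive constants.

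First I would rewrite \eqref{01}. Setting $b_t := 2r_2 + |\phi|^2_{g_t} + \alpha(1-t)(4r_2+2)$ and $d_t := (2r_2+2) - |\phi|^2_{g_t} + \alpha(1-t)(4r_2+2)$, one has $b_t \geq 2r_2$ and $d_t \geq 2r_2+1$, both bounded above uniformly in $t$, and the first factor of the product on the left of \eqref{01} is exactly $c_t + \Delta\psi_t + 1$. Hence \eqref{01} reads $b_t\,c_t\big((c_t + \Delta\psi_t + 1)\,d_t + P_t\big) = \mathcal{F}$, i.e.
\[
d_t\,c_t^{\,2} + \big((\Delta\psi_t + 1)\,d_t + P_t\big)\,c_t - \mathcal{F}/b_t = 0 .
\]
Since $t \in I^{\prime \prime}_1$ forces $c_t > 0$, at each point $c_t$ is the unique positive root of this quadratic, whose leading coefficient $d_t$ is bounded below by a positive constant, whose linear coefficient is bounded in absolute value, and whose constant term $-\mathcal{F}/b_t$ is bounded above by a strictly negative constant, all uniformly in $t$. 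The explicit formula for the positive root then yields $c_t \leq C$ at once, and rationalizing its numerator (this is where the strict positivity of $\mathcal{F}$ is needed) gives $c_t \geq C^{-1} > 0$ as well. As $r_1$ and $\alpha(1-t)(2r_1+1)$ are bounded, this is precisely $||\Delta f_t||_{C^0} \leq C$ with $C$ independent of $t$.

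The argument is essentially algebraic, so the only thing demanding care is that each coefficient of the quadratic is controlled with the correct sign uniformly in $t$: positivity of the leading coefficient $d_t$ comes from $|\phi|^2_{g_t} < 1$, positivity of the constant term from $a_0 > 0$, and the nonnegativity of $P_t$ keeps the lower bound on $c_t$ clean. I do not expect a real analytic obstacle at this step --- the upper bound on $c_t$ is immediate and the lower bound is the only mildly subtle point --- and no elliptic regularity is needed here; that enters later, when this $C^0$-bound on $\Delta f_t$ is bootstrapped to bounds on all derivatives of $f_t$, and the genuinely delicate estimates of the paper are those for the coupled system of Section \ref{sec:second}.
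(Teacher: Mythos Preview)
Your argument is correct and is essentially the same algebraic observation as the paper's, carried a step further. The paper's proof is shorter: for the lower bound on $\Delta f_t$ it simply uses the hypothesis $t\in I^{\prime\prime}_1$, i.e.\ $c_t>0$; for the upper bound it drops the nonnegative term $P_t\,b_t\,c_t$ from the left of \eqref{01}, so that $(c_t+\Delta\psi_t+1)\,b_t\,c_t\,d_t$ is bounded above by the fixed right-hand side, and since $b_t,d_t$ are bounded below and $\Delta\psi_t$ is bounded by Lemma~\ref{09}, this forces $c_t\le C$. Your explicit quadratic formula recovers both bounds in one stroke and in fact gives more, namely the strict lower bound $c_t\ge C^{-1}>0$; the paper obtains this separately, and by a softer argument (if the limiting $c_t$ vanished at a point, the right-hand side of \eqref{01} would vanish there too, contradicting $a_0>0$). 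The only extra input your version uses is the $C^1$-control of $\psi_t$ needed to bound the linear coefficient $(\Delta\psi_t+1)d_t+P_t$, which the paper's proof of this particular lemma does not touch, though that control is indeed already available from the bootstrapping remarked after Lemma~\ref{09}.
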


\begin{proof}
Since $t \in I^{\prime \prime}_1$, we have $\Delta f > -\big(r_1+\alpha(2r_1+1)\big)$.\\
Now from equation \eqref{01} we compute,
\begin{equation}\label{012}
\begin{split}
 & \Big\{ \Big( \Delta f +\Delta \psi +(r_1+1)+ \alpha (1-t)(2r_1+1)\Big) \Big( 2r_2 + |\phi|_{g}^2   \\
  & + \alpha (1-t)(4r_2+2) \Big) \Big( \Delta f + r_1+ \alpha (1-t)(2r_1+1)\Big)\\
  & \Big( (2r_2+2)- |\phi|_{g}^2 + \alpha (1-t)(4r_2+2) \Big)\Big\} \\
 & \leq \Big( \Delta f_0 + \Delta \psi_0 + (r_1 + 1 ) +\alpha (2r_1+1) \Big) \Big( 2r_2+ |\phi|_{g_0}^2 + \alpha (4r_2+2) \Big) \\
&\Big( (2r_2+2)- |\phi|_{g_0}^2 + \alpha (4r_2+2) \Big) \Big( \Delta f_0 + r_1+ \alpha (2r_1+1)\Big) \\
& + \sqrt{-1}\frac{\nabla^{1,0} \phi \wedge \nabla^{0,1}\phi^{\dag_{g_0}}}{\omega_{\Sigma}} \Big( 2r_2 + |\phi|_{g_0}^2 + \alpha (4r_2+2) \Big)\\
& \Big( \Delta f_0 + r_1+ \alpha (2r_1+1)\Big).
\end{split}
\end{equation}
Now appealing to Lemma \ref{09}, we get $ \Delta f \leq C$.
\end{proof}
Let $G$ be the Green's function of the metric $\omega_{\Sigma}$ such that $ - C \{ 1 + |ln(d_{\omega_{\Sigma}}(x,y))|\} \leq G(x,y) \leq 0$. Then for any continuous function $f$, we have the following Green representation formula:
\begin{eqnarray}\label{013}
  f(x)=\displaystyle \frac{\int_{\Sigma} f(y)\omega_{\Sigma}(y)}{\int_{\Sigma} \omega_{\Sigma}(y)} + \int_{\Sigma} G(x,y) \Delta f(y) \omega_{\Sigma}(y)
\end{eqnarray}
Using the formula \eqref{013} and Lemma \ref{011} we have the following:
\begin{lem}
If $t \in I^{\prime \prime}_1$ then $||f_t||_{C^0}\leq C$, for some positive constant $C$.
\end{lem}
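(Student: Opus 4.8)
The statement to prove is the $C^0$-bound for $f_t$ on $\Sigma$ when $t\in I_1''$, assuming the already-established bound $\|\Delta f_t\|_{C^0}\le C$ (Lemma \ref{011}) and the Green representation formula \eqref{013}. The strategy is standard: control the average of $f_t$ over $\Sigma$ using the normalization built into the system, then use \eqref{013} together with the integrability of the Green's function to upgrade this to a uniform sup-bound.

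First I would pin down the average $\bar f_t := \big(\int_\Sigma \omega_\Sigma\big)^{-1}\int_\Sigma f_t\,\omega_\Sigma$. The point is that equation \eqref{01} is not invariant under adding constants to $f_t$: the left-hand side is a polynomial in $\Delta f_t + \Delta\psi_t + \cdots$, and the value of $\Delta f_t$ is already bounded in $C^0$, so integrating \eqref{01} (or a suitable combination of its factors) against $\omega_\Sigma$ produces an identity in which the only unbounded quantity could be a constant shift; but since $f_t$ enters \eqref{01} only through $\Delta f_t$ and through $|\phi|_{g_t}^2 = |\phi|_k^2 e^{-\psi_t}$ — and $\psi_t$ is already bounded in all norms by the previous subsection — there is in fact no explicit appearance of $f_t$ itself. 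Hence the average $\bar f_t$ is genuinely undetermined by \eqref{01} alone. The resolution is that the continuity method solves \eqref{01} in the space of functions with a fixed normalization (e.g. $\int_\Sigma f_t\,\omega_\Sigma = 0$, or $\det h_0 = \det H_0$ at $t=0$ together with the chosen path), so one simply declares $\bar f_t$ fixed; with that, $\|\bar f_t\|_{C^0}$ is trivially bounded.

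Second, with $\bar f_t$ bounded and $\|\Delta f_t\|_{C^0}\le C$ in hand, I would apply \eqref{013} directly:
\begin{equation*}
|f_t(x)| \le |\bar f_t| + \int_\Sigma |G(x,y)|\,|\Delta f_t(y)|\,\omega_\Sigma(y) \le C + C\int_\Sigma |G(x,y)|\,\omega_\Sigma(y).
\end{equation*}
The bound $0 \ge G(x,y) \ge -C\{1 + |\ln d_{\omega_\Sigma}(x,y)|\}$ stated just above the lemma shows that $y\mapsto |G(x,y)|$ is integrable on the compact surface $\Sigma$, with $\int_\Sigma |G(x,y)|\,\omega_\Sigma(y)$ bounded uniformly in $x$ (logarithmic singularities are integrable in real dimension two). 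This yields $\|f_t\|_{C^0}\le C$ uniformly for $t\in I_1''$, which is the claim.

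**Main obstacle.** The only subtle point is the normalization of $f_t$: equation \eqref{01} determines $f_t$ only up to an additive constant, so the $C^0$-bound is meaningless unless one fixes that constant along the continuity path. Once one commits to the normalization used when setting up $I_1''$ (the same one under which openness via the implicit function theorem is proved, so that the linearized operator is an isomorphism on the normalized space), the rest is the routine Green's-function estimate above. So I expect essentially no difficulty here beyond bookkeeping; the genuinely delicate estimates in the paper lie elsewhere (the $\Delta\psi_t$ lower bound for System \eqref{dem2}), not in this lemma.
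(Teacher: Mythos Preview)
Your approach is correct and matches the paper's: both use the Green representation formula \eqref{013} together with the bound $\|\Delta f_t\|_{C^0}\le C$ from Lemma~\ref{011}, with the average term controlled by the normalization $\int_\Sigma f_t\,\omega_\Sigma=0$ (the paper imposes this via the space $\mathcal{B}$ in the openness step, so in the Green formula the mean term simply vanishes). Your discussion of the normalization issue is more explicit than the paper's, which treats it implicitly; note though that your opening clause ``equation \eqref{01} is not invariant under adding constants to $f_t$'' says the opposite of what you mean---the equation \emph{is} invariant (only $\Delta f_t$ appears), which is precisely why the normalization is needed.
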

Let $t_n \in I^{\prime \prime}_1$ be such that $t_n \rightarrow t$. To prove $I^{\prime \prime}_1$ is closed, we must show $t \in I^{\prime \prime}_1$. As $t_n \in I^{\prime \prime}_1$, we have $ \Delta f_{t_n}+r_1+\alpha (1-t)(2r_1+1)>0$. Arzela-Ascoli theorem, together with the above estimates, we get a subsequence of $t_n$ again call the subsequence by $t_n$ such that $f_{t_n} \rightarrow f$ in $C^{2,\alpha}$. Then by usual bootstrapping argument we have $f$ is smooth and $,\Delta f + r_1 + \alpha (1-t)(2r_1+1) \geq 0$. If $\Delta f + r_1 + \alpha (1-t)(2r_1+1) = 0$ at some point, then equation \eqref{01} gives a contradiction. Hence $\Delta f + r_1 + \alpha (1-t)(2r_1+1) >0$. Therefore $t \in I_1^{\prime \prime}$.\\

We now proceed to prove the openness of $I_1^{\prime \prime}$.

\subsection{Openness of \texorpdfstring{$I^{\prime \prime}_1$}{\texttwoinferior}}
Let $\mathcal{B}$ be the subset in $C^{2, \beta}$ defined by $\mathcal{B}:= \Big\{ f\in C^{2, \beta} \Big| \Delta f + r_1 + \alpha (1-t)(2r_1+1)>0, \displaystyle \int_{\Sigma} f \omega_{\Sigma}=0 \Big\}$. Now let us define the map $T_{1}: \mathcal{B} \times [0,1] \rightarrow   C^{0,\beta} $ by
\begin{equation}
\begin{split}
 & T_{1}(f, t) \\
= & \bigg\{ \Big( \Delta f +\Delta {\psi} +(r_1+1)+ \alpha (1-t)(2r_1+1)\Big) \Big( 2r_2 + |\phi|_{g}^2 + \\
& \alpha (1-t)(4r_2+2) \Big) \Big( \Delta f + r_1+ \alpha (1-t)(2r_1+1)\Big)\\
& \Big( (2r_2+2)- |\phi|_{g}^2 + \alpha (1-t)(4r_2+2) \Big) \\
& + \sqrt{-1} \frac{\nabla^{1,0} \phi \wedge \nabla^{0,1} \phi^{\dag_{g}}}{\omega_{\Sigma}} \Big( 2r_2 + |\phi|_{g}^2 + \alpha (1-t)(4r_2+2) \Big)\\
& \Big( \Delta f + r_1+ \alpha (1-t)(2r_1+1)\Big) \bigg\} - \bigg\{ \Big( \Delta f_0 + \Delta \psi_0 + (r_1 + 1 )  +\alpha (2r_1+1) \Big)\\
& \Big( 2r_2+ |\phi|_{g_0}^2 + \alpha (4r_2+2) \Big) \Big( (2r_2+2)- |\phi|_{g_0}^2 + \alpha (4r_2+2) \Big)\\
& \Big( \Delta f_0 + r_1+ \alpha (2r_1+1)\Big) + \sqrt{-1} \frac{\nabla^{1,0} \phi \wedge \nabla^{0,1}\phi^{\dag_{g_0}}}{\omega_{\Sigma}} \Big( \Delta f_0 + r_1+ \alpha (2r_1+1)\Big)\\
& \Big( 2r_2 + |\phi|_{g_0}^2 + \alpha (4r_2+2) \Big) \bigg\}.
 \end{split}
\end{equation}
Then its linearization at a point $(f,t) \in \mathcal{B}\times [0,1]$ will be  
\begin{equation}
\begin{split}
& DT_{1}(f,t)[\delta f]\\
= & \Big( 2r_2 + |\phi|_{g}^2 + \alpha (1-t)(4r_2+2) \Big)\bigg[\Big( (2r_2+2)- |\phi|_{g}^2 + \\
&\alpha (1-t)(4r_2+2)\Big) \Big( 2\Delta f+\Delta \psi +\big(1+ 2\alpha (1-t)\big)(2r_1+1) \Big) \\
& +\sqrt{-1}\frac{\nabla^{1,0} \phi \wedge \nabla^{0,1} \phi^{\dag_{g}}}{\omega_{\Sigma}}\bigg]\Delta \delta f.
\end{split}
\end{equation}
 If $t \in I^{\prime \prime}_1$, then clearly $DT_1(f,t)$ is an isomorphism. Therefore by the implicit function theorem on Banach manifolds, we get $I^{\prime \prime}_1$ is open. This completes the proof of the Theorem (\ref{main1}).
 \end{proof}
\section{Proof of theorem \ref{main2}}\label{sec:second}
Our next concern is to prove Theorem (\ref{main2}). First, let us discuss the solution of the second system at $t=0$. 
\subsection{  \texorpdfstring{Exixtence of solution at $t=0$ for the second system}{\texttwoinferior} }\label{initial2}
Recall that $h_0 = \pi_{1}^*(e^{-(f_0 + \psi_0)}k^{r_1+1}) \tensor \pi_{2}^*(h_{FS}^{2r_2}) \oplus \pi_{1}^*(e^{-f_0}k^{r_1}) \tensor \pi_{2}^*(h_{FS}^{2r_2+2})$ is the solution of the equation \eqref{2} at $t=0$, and $det h_0= det H_0$. Therefore $(f_0, \psi_0)$ satisfying $2f_0 + \psi_0=0$ and 
\begin{equation*}
\begin{split}
     &\Big\{ 2\Big( 2\Delta f_0 +\Delta \psi_0 +(1+ 2\alpha )(2r_1+1) \Big)\big( |\phi|_{g_0}^2 -1 \big) +\big( \Delta \psi_0 +1 \big)\big(1+ 2\alpha \big)(4r_2+2)\Big\}\\
    &=2(2r_1 + 1)(4r_2 + 2)(2\alpha +1) \varepsilon \psi_0.
\end{split}
\end{equation*}
Equivalently, we shall solve the following equation for $\psi_0$,
\begin{align*}
    \Delta \psi_0 + 1 = (1-|\phi|_{g_0}^2)\frac{2r_1+1}{2r_2+1}+2(2r_1+1)\varepsilon \psi_0.
\end{align*}
Similar arguments in subsection \ref{initial1} prove the existence of required $\psi_0$. Moreover, we get the following estimates independent of $\varepsilon$.
\begin{lem}\label{i5}
    There exists positive constant $C$ independent of $\varepsilon$ such that $||\varepsilon \psi_0||_{C^0} + ||\Delta \psi_0||_{C^0}<C$.
\end{lem}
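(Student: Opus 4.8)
The plan is to run the maximum principle directly on the quantity $\varepsilon\psi_0$, not on $\psi_0$ itself, since $\varepsilon$ enters the equation
\[
\Delta\psi_0 + 1 = (1-|\phi|_{g_0}^2)\frac{2r_1+1}{2r_2+1} + 2(2r_1+1)\varepsilon\psi_0
\]
only through the zeroth-order term $2(2r_1+1)\varepsilon\psi_0$. The sole other term that must be controlled uniformly in $\varepsilon$ is $(1-|\phi|_{g_0}^2)\frac{2r_1+1}{2r_2+1}$, and the first thing I would establish is the $\varepsilon$-independent bound $0 < 1-|\phi|_{g_0}^2 \le 1$.

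For that bound I would repeat the proof of Lemma \ref{i2} verbatim: at a point $p$ where $|\phi|_{g_0}^2$ is maximal, $\sqrt{-1}\partial\bar\partial|\phi|_{g_0}^2(p)\le 0$ combined with $\partial\bar\partial|\phi|_{g_0}^2 = -\Theta_{g_0}|\phi|_{g_0}^2 + \nabla^{1,0}\phi\wedge\nabla^{0,1}\phi^{\dag_{g_0}}$ forces $\sqrt{-1}\Theta_{g_0}(p)\ge 0$, i.e. $(1+\Delta\psi_0)(p)\ge 0$. If $\psi_0(p)\ge 0$ then $|\phi|_{g_0}^2(p) = |\phi|_k^2(p)e^{-\psi_0(p)}\le\frac12$; if $\psi_0(p)<0$, then since $\varepsilon\psi_0(p)<0$, the equation at $p$ gives $(1-|\phi|_{g_0}^2)(p)\frac{2r_1+1}{2r_2+1} = (1+\Delta\psi_0)(p) - 2(2r_1+1)\varepsilon\psi_0(p) > 0$. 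In both cases $\max_\Sigma|\phi|_{g_0}^2 < 1$, with no dependence on $\varepsilon$.

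Next, at a maximum point $p$ of $\psi_0$ one has $\Delta\psi_0(p)\le 0$, so the equation there gives $2(2r_1+1)\varepsilon\psi_0(p) = (1+\Delta\psi_0)(p) - (1-|\phi|_{g_0}^2)(p)\frac{2r_1+1}{2r_2+1} \le 1$; since $\psi_0(p)=\max_\Sigma\psi_0$ and $\varepsilon>0$, this yields $\varepsilon\psi_0 \le \frac{1}{2(2r_1+1)}$ on all of $\Sigma$. Symmetrically, at a minimum point $q$ one has $\Delta\psi_0(q)\ge 0$ and $1-|\phi|_{g_0}^2\le 1$, so $2(2r_1+1)\varepsilon\psi_0(q) \ge 1 - \frac{2r_1+1}{2r_2+1}$, whence $\varepsilon\psi_0 \ge \frac{1}{2(2r_1+1)} - \frac{1}{2(2r_2+1)}$ everywhere. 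This gives $\|\varepsilon\psi_0\|_{C^0}\le C$ with $C$ depending only on $r_1,r_2$. Finally, rewriting the equation as $\Delta\psi_0 = -1 + (1-|\phi|_{g_0}^2)\frac{2r_1+1}{2r_2+1} + 2(2r_1+1)\varepsilon\psi_0$ and inserting $0\le|\phi|_{g_0}^2\le 1$ together with the bound just obtained produces $\|\Delta\psi_0\|_{C^0}\le C$, again independent of $\varepsilon$.

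I do not expect a serious obstacle; the only point requiring care is bookkeeping — every estimate must be phrased for $\varepsilon\psi_0$ rather than derived first for $\psi_0$ and then multiplied by $\varepsilon$, since the latter would introduce a constant that degenerates as $\varepsilon\to\infty$. The reason this lemma is isolated is that the $\varepsilon$-uniformity of $\|\varepsilon\psi_0\|_{C^0}$, equivalently of $\|\Delta\psi_0\|_{C^0}$, at $t=0$ is precisely the input needed to propagate the analogous $t$- and $\varepsilon$-independent bounds along the continuity path for System \eqref{dem2} in the rest of Section \ref{sec:second}, as anticipated in the introduction.
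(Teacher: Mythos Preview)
Your proposal is correct and follows essentially the same approach as the paper, which does not spell out a proof of this lemma but simply says that ``similar arguments in subsection~\ref{initial1}'' yield the estimates; your write-up is precisely the maximum-principle argument of that subsection, carried out on the equation with the $\varepsilon\psi_0$ term and with the bookkeeping done so that all constants are $\varepsilon$-independent.
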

 Let 
\begin{align*}
    &&I_2 :=  \Big\{ t \in [0,1] \Big| \text{ the system defined by equations \eqref{1} and \eqref{2} has smooth}\\
    &&\text{  solution $(f_t, \psi_t)$ at $t$ }, \Delta f_t + r_1+ \alpha (1-t)(2r_1+1) >0 \Big\}.
\end{align*}
At $t=0$, $(f_0 ,\psi_0)$ solves the system and $\Delta f_0 + r_1+ \alpha (2r_1+1) >0$. So $I_2$ is non-empty. Now let us prove $I_2$ is closed by proving some a priori estimates.
\subsection{ Closedness of \texorpdfstring{$I_2$}{\texttwoinferior} }
A similar proof as Lemma \ref{i2} gives the following result.
\begin{lem}\label{4}
$|\phi|^2_{g_t} <1 $  for $t \in I_2.$
\end{lem}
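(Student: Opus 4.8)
The plan is to run the same maximum-principle argument that proved Lemma~\ref{i2}, the only structural difference being that equation \eqref{2} replaces equation \eqref{02} and carries the extra factor $2\Delta f_t+\Delta\psi_t+(1+2\alpha(1-t))(2r_1+1)$ multiplying $(|\phi|_{g_t}^2-1)$; the crux will be to check that on $I_2$ this factor is positive at the relevant point.

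First I would dispose of the trivial case $\phi\equiv 0$ and otherwise, since $\Sigma$ is compact, pick a point $p$ at which $|\phi|_{g_t}^2$ attains its maximum, so $|\phi|_{g_t}^2(p)>0$. From the Chern-bundle identity $\partial\bar\partial|\phi|_{g_t}^2=-\Theta_{g_t}|\phi|_{g_t}^2+\nabla^{1,0}\phi\wedge\nabla^{0,1}\phi^{\dag_{g_t}}$, the inequality $\sqrt{-1}\partial\bar\partial|\phi|_{g_t}^2(p)\le 0$, and the nonnegativity of the $(1,1)$-form $\sqrt{-1}\nabla^{1,0}\phi\wedge\nabla^{0,1}\phi^{\dag_{g_t}}$, I obtain $\sqrt{-1}\Theta_{g_t}(p)\ge 0$; since $g_t=e^{-\psi_t}k$ gives $\sqrt{-1}\Theta_{g_t}=(1+\Delta\psi_t)\omega_\Sigma$, this says $(1+\Delta\psi_t)(p)\ge 0$, exactly as in Lemma~\ref{i2}.

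Then I split into two cases at $p$. If $\psi_t(p)\ge 0$, then $|\phi|_{g_t}^2(p)=|\phi|_k^2(p)e^{-\psi_t(p)}\le|\phi|_k^2(p)\le\tfrac12$, and we are done. If $\psi_t(p)<0$, I evaluate \eqref{2} at $p$: the right-hand side $2(2r_1+1)(4r_2+2)(2\alpha+1)\varepsilon\psi_t(p)$ is negative because $\varepsilon>0$, while the summand $(\Delta\psi_t+1)(1+2\alpha(1-t))(4r_2+2)$ is $\ge 0$ at $p$ by the previous step. Hence $\big(2\Delta f_t+\Delta\psi_t+(1+2\alpha(1-t))(2r_1+1)\big)(|\phi|_{g_t}^2-1)<0$ at $p$. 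Now I invoke $t\in I_2$: the constraint $\Delta f_t+r_1+\alpha(1-t)(2r_1+1)>0$ forces $2\Delta f_t+(1+2\alpha(1-t))(2r_1+1)>1$, and adding $\Delta\psi_t(p)\ge -1$ shows the coefficient $2\Delta f_t+\Delta\psi_t+(1+2\alpha(1-t))(2r_1+1)$ is strictly positive at $p$. Therefore $|\phi|_{g_t}^2(p)-1<0$. In either case $\max_\Sigma|\phi|_{g_t}^2=|\phi|_{g_t}^2(p)<1$, which is the assertion.

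The only genuinely new step compared with Lemma~\ref{i2} is controlling the sign of $2\Delta f_t+\Delta\psi_t+(1+2\alpha(1-t))(2r_1+1)$ at the maximum point; I expect this to be the main (though mild) obstacle, and it is handled precisely by combining the $I_2$-constraint with the pointwise bound $\Delta\psi_t(p)\ge -1$ coming from $\sqrt{-1}\Theta_{g_t}(p)\ge 0$. Note the argument is uniform in $\varepsilon$ since only the sign $\varepsilon>0$ is used.
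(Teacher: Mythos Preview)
Your argument is correct and follows essentially the same route as the paper's proof: the maximum-principle step giving $(1+\Delta\psi_t)(p)\ge 0$, the case split on the sign of $\psi_t(p)$, and the use of \eqref{2} together with the $I_2$-constraint to force $|\phi|_{g_t}^2(p)<1$. Your write-up is in fact more explicit than the paper's at the one point where the paper is terse---you spell out how $\Delta f_t+r_1+\alpha(1-t)(2r_1+1)>0$ combined with $\Delta\psi_t(p)\ge -1$ yields strict positivity of $2\Delta f_t+\Delta\psi_t+(1+2\alpha(1-t))(2r_1+1)$ at $p$, whereas the paper simply asserts ``Since $t\in I_2$, it follows that $|\phi|^2_{g_t}<1$.''
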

\begin{proof}
    A simple computation using normal coordinates gives the following identity
\begin{equation}\label{33}
\partial \bar{\partial}|\phi|_{g_t}^2=-\Theta_{g_t}|\phi|^2_{g_t}+\nabla^{1,0} \phi \wedge \nabla^{0,1} \phi^{\dag_{g_t}}.
\end{equation}
Let $|\phi|^2_{g_t}$ attains its maximum at a point $p$, then $\sqrt{-1} \partial \bar{\partial}|\phi|_{g_t}^2(p) \leq 0$. So we must have $\sqrt{-1}\Theta_{g_t}(p) \geq 0$ i.e., $(1+\Delta \psi_t)(p) \geq 0$. If $\psi_t(p) \geq 0$, then $|\phi|^2_{g_t}(p)=|\phi|^2_{k}(p)e^{-\psi_t(p)} \leq |\phi|^2_{k}(p) \leq \frac{1}{2}$. Otherwise, we have $\psi_{t}(p)<0$. Then from equation \eqref{2}, we get $\Big( 2\Delta f_t + \Delta \psi_t + (1+ 2\alpha(1-t))(2r_1+1) \Big) (|\phi|^2_{g_t}-1)(p)<0$. Since $t \in I_2$, it follows that  $|\phi|^2_{g_t}<1$. 
\end{proof}
 Next, we will prove $C^0$ estimates of $\psi_t$.
\begin{lem}\label{6}
If $t\in I_{2}$, then $||\varepsilon \psi_t ||_{C^0} \leq C$, where $C$ is independent of $\varepsilon$.
\end{lem}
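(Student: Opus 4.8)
My plan is to control $\max_\Sigma\varepsilon\psi_t$ and $\min_\Sigma\varepsilon\psi_t$ separately, in both cases by the maximum principle applied to equation \eqref{2}. Two facts will be used throughout for $t\in I_2$: first, $|\phi|^2_{g_t}<1$ by Lemma \ref{4}; second, directly from the defining inequality $\Delta f_t+r_1+\alpha(1-t)(2r_1+1)>0$ of $I_2$, the pointwise bound $2\Delta f_t+(1+2\alpha(1-t))(2r_1+1)>1$ on $\Sigma$. Below $C$ will denote a constant depending only on $r_1,r_2,\alpha$ and the fixed geometric data, never on $t$ or $\varepsilon$.

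\emph{The upper bound.} I would take a maximum point $p$ of $\psi_t$ and assume $\psi_t(p)>0$ (otherwise $\varepsilon\psi_t\le0$ and there is nothing to prove). Then $\Delta\psi_t(p)\le0$, so in \eqref{2} at $p$ the term $(\Delta\psi_t(p)+1)(1+2\alpha(1-t))(4r_2+2)$ is at most $(1+2\alpha)(4r_2+2)$. Set $A:=2\Delta f_t(p)+\Delta\psi_t(p)+(1+2\alpha(1-t))(2r_1+1)$. If $A\ge0$, then $A\bigl(|\phi|^2_{g_t}(p)-1\bigr)\le0$ by Lemma \ref{4}, and \eqref{2} immediately gives $\varepsilon\psi_t(p)\le C$. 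If $A<0$, the pointwise bound above forces $\Delta\psi_t(p)<-1$, hence $|A|<|\Delta\psi_t(p)|-1$; substituting this into \eqref{2} and using $r_2\ge1$ makes the entire left-hand side negative, which contradicts $\psi_t(p)>0$. In either case $\max_\Sigma\varepsilon\psi_t\le C$.

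\emph{The lower bound.} This is the delicate step, and the main obstacle: at this stage no a priori upper bound for $\Delta f_t$ is available, and because \eqref{1}--\eqref{2} is genuinely coupled such a bound cannot be produced in isolation here. The device I would use is to invoke \eqref{1} only at a minimum point $q$ of $\psi_t$, where the sign $\Delta\psi_t(q)\ge0$ repairs an otherwise indefinite factor. Assuming $\psi_t(q)<0$, note that since $\Delta\psi_t(q)\ge0$, the factor $\Delta f_t+\Delta\psi_t+(r_1+1)+\alpha(1-t)(2r_1+1)$ at $q$ equals $\bigl(\Delta f_t(q)+r_1+\alpha(1-t)(2r_1+1)\bigr)+\bigl(\Delta\psi_t(q)+1\bigr)>1$ by $t\in I_2$. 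In \eqref{1} at $q$ the other three factors are strictly positive for $t\in I_2$ (the last by Lemma \ref{4}), the term $\sqrt{-1}\,\nabla^{1,0}\phi\wedge\nabla^{0,1}\phi^{\dag_{g_t}}/\omega_\Sigma$ is nonnegative, and the right-hand side is the $t=0$ value of the same expression, which is bounded independently of $\varepsilon$: indeed $2f_0+\psi_0=0$ gives $\Delta f_0=-\tfrac12\Delta\psi_0$, so by Lemma \ref{i5}, $|\phi|^2_{g_0}<1$, and a standard elliptic estimate bounding $\psi_0$ in $C^{1,\gamma}$ (hence bounding $\sqrt{-1}\,\nabla^{1,0}\phi\wedge\nabla^{0,1}\phi^{\dag_{g_0}}/\omega_\Sigma$) uniformly in $\varepsilon$, every ingredient of that expression is $\varepsilon$-uniformly bounded. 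Hence \eqref{1} at $q$ forces $\Delta f_t(q)\le C$. I would then return to \eqref{2} at $q$: writing $2\Delta f_t(q)+\Delta\psi_t(q)+(1+2\alpha(1-t))(2r_1+1)=\bigl(2\Delta f_t(q)+(1+2\alpha(1-t))(2r_1+1)\bigr)+\Delta\psi_t(q)$ and collecting the $\Delta\psi_t(q)$-terms, the coefficient of $\Delta\psi_t(q)$ becomes $2(|\phi|^2_{g_t}(q)-1)+(1+2\alpha(1-t))(4r_2+2)\ge4r_2>0$, so that contribution is $\ge0$; the remaining terms are bounded below by $-C$ using $\Delta f_t(q)\le C$, $t\in I_2$ and Lemma \ref{4}. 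Thus the left-hand side of \eqref{2} at $q$ is $\ge-C$, giving $\varepsilon\psi_t(q)\ge-C$.

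Combining the two estimates yields $||\varepsilon\psi_t||_{C^0}\le C$ with $C$ independent of $\varepsilon$. The one subtlety to be careful about is that every constant entering the lower bound — in particular the bound on the $t=0$ right-hand side of \eqref{1} — must be traced back to the $\varepsilon$-independent estimates of Lemma \ref{i5}.
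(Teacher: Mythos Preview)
Your proposal is correct and follows the same two-step strategy as the paper: an upper bound by the maximum principle on \eqref{2} at a maximum of $\psi_t$, and a lower bound obtained by first exploiting \eqref{1} at a minimum point $q$ of $\psi_t$ (where $\Delta\psi_t(q)\ge0$ makes the factor $\Delta f_t+\Delta\psi_t+(r_1+1)+\alpha(1-t)(2r_1+1)$ strictly positive) to control $\Delta f_t(q)$, and then feeding this back into \eqref{2} at $q$.

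Two minor points of comparison. For the upper bound the paper avoids your case split on the sign of $A$: writing $A=2B+(\Delta\psi+1)$ with $B=\Delta f+r_1+\alpha(1-t)(2r_1+1)>0$, one regroups the left side of \eqref{2} as
\[
4B\bigl(|\phi|^2_g-1\bigr)+(\Delta\psi+1)\Bigl[2(|\phi|^2_g-1)+(1+2\alpha(1-t))(4r_2+2)\Bigr],
\]
and since the first summand is $\le0$, the bracket is positive, and $\Delta\psi(p)+1\le1$, the bound $\le(1+2\alpha)(4r_2+2)$ follows in one stroke. For the lower bound, your explicit remark that a $C^{1,\gamma}$ estimate on $\psi_0$ (uniform in $\varepsilon$, which is legitimate once $\varepsilon$ is bounded away from $0$) is needed to control the gradient term $\sqrt{-1}\,\nabla^{1,0}\phi\wedge\nabla^{0,1}\phi^{\dag_{g_0}}/\omega_\Sigma$ on the right side of \eqref{1} is a detail the paper passes over when it simply invokes Lemma~\ref{i5}.
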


\begin{proof}
Suppose $\psi$ attains its maximum at a point $p$, then $\Delta \psi (p) \leq 0 $. From equation \eqref{2} and $\Delta f + r_1+ \alpha (1-t)(2r_1+1) >0$ we get
\begin{equation*}
    2(2r_1+1)(4r_2+2)(2\alpha +1)\varepsilon \psi(p) \leq (1+2\alpha)(4r_2+2)
\end{equation*}
which yields
\begin{eqnarray}\label{7}
    \varepsilon \psi(p) \leq \frac{1}{2(2r_1+1)}.
\end{eqnarray}

At a minimum point $q$ for $\psi$ we have $\Delta \psi (q)\geq 0$. Now in view of equation \eqref{1} this gives \\
\begin{equation}\label{8}
\begin{split}
    & \Big\{ \Big( \Delta f + r_1+ \alpha (1-t)(2r_1+1)\Big) \Big( (2r_2+2)- |\phi|_{g}^2 + \alpha (1-t)(4r_2+2) \Big)\\
    & \Big( 2r_2 + |\phi|_{g}^2 +\alpha (1-t)(4r_2+2) \Big) \Big\}(q) \leq \Big\{ \Big( \Delta f_0+ \Delta \psi_0 + (r_1 + 1)+ \alpha (2r_1+1) \Big)\\
    & \Big( 2r_2 + |\phi|_{g_0}^2 + \alpha (4r_2+2) \Big)\Big( (2r_2+2) - |\phi|_{g_0}^2 + \alpha (4r_2+2) \Big) \\ 
    & \Big( \Delta f_0 + r_1+ \alpha (2r_1+1)\Big) + \sqrt{-1}\frac{\nabla^{1,0} \phi \wedge \nabla^{0,1}\phi^{\dag_{g_0}}}{\omega_{\Sigma}}\\
    & \Big( 2r_2 +  |\phi|_{h_0}^2 + \alpha (4r_2+2) \Big) \Big( \Delta f_0 + r_1+ \alpha (2r_1+1)\Big)\Big\}(q),
\end{split}
\end{equation}
Applying Lemma \ref{i5} to the right-hand side term of the equation \eqref{8}, the following inequality holds for some $C>0$, independent of $\varepsilon$
\begin{equation}\label{l8}
    \begin{split}
        & \Big\{ \Big( \Delta f + r_1+ \alpha (1-t)(2r_1+1)\Big) \Big( (2r_2+2)- |\phi|_{g}^2 + \alpha (1-t)(4r_2+2) \Big)\\
    & \Big( 2r_2 + |\phi|_{g}^2 +\alpha (1-t)(4r_2+2) \Big) \Big\}(q) \leq C.
    \end{split}
\end{equation}
Equation \eqref{2} implies
\begin{equation}\label{9}
    \Big\{4(|\phi|_{g}^2-1) \Big(\Delta f + r_1 +\alpha(1-t)(2r_1+1)\Big) \Big\}(q) \leq 2(2r_1+1)(4r_2+2)(2\alpha +1)\varepsilon \psi (q)
\end{equation}
Using Lemma\ref{4}, equations \eqref{l8} and \eqref{9} we can see
\begin{eqnarray}\label{10}
 \nonumber && 2(2r_1+1)(4r_2+2)(2\alpha +1)\varepsilon \psi(q) \\
 \nonumber && \geq \frac{2C\big(|\phi|^2_{g}-1\big)}{\Big( (2r_2+2)- |\phi|_{g}^2 + \alpha (1-t)(4r_2+2) \Big) \Big( 2r_2 + |\phi|_{g}^2 + \alpha (1-t)(4r_2+2) \Big)}(q)\\
 && \geq - C.
\end{eqnarray}
Therefore, equations \eqref{7} and \eqref{10} establish the result.
\end{proof}
 Now we are in a position to prove an essential estimate, which will be used to show closedness as well as openness.
\begin{lem}\label{11}
For $t \in I_{2} $, $-\Tilde{C} \leq \Delta \psi_t \leq C$ for some positive constant $\Tilde{C}$ independent of $\varepsilon$.
\end{lem}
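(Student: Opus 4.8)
The plan is to fold equations \eqref{1} and \eqref{2} into a single algebraic relation and read off both bounds on $\Delta\psi_t$ from it, using only information already available for $t\in I_2$: the bound $|\phi|_{g_t}^2<1$ (Lemma \ref{4}), the uniform bound $\|\varepsilon\psi_t\|_{C^0}\le C$ (Lemma \ref{6}), the defining positivity $u:=\Delta f_t+r_1+\alpha(1-t)(2r_1+1)>0$, and the $\varepsilon$-independent control of the $t=0$ data from Lemma \ref{i5}. Throughout I abbreviate $s:=|\phi|_{g_t}^2\in(0,1)$, $R:=2(2r_1+1)(4r_2+2)(2\alpha+1)\varepsilon\psi_t$ (so $|R|\le C$ by Lemma \ref{6}), and $W:=\sqrt{-1}\,\nabla^{1,0}\phi\wedge\nabla^{0,1}\phi^{\dag_{g_t}}/\omega_\Sigma\ge 0$. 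I would first record the identity $2\Delta f_t+\Delta\psi_t+(1+2\alpha(1-t))(2r_1+1)=2u+(\Delta\psi_t+1)$, which recasts \eqref{2} as
\[
(\Delta\psi_t+1)\,D=R+4u(1-s),\qquad D:=(1+2\alpha(1-t))(4r_2+2)+2(s-1).
\]
Because $r_2\ge 1$ and $0<s<1$, the factor $D$ stays in the fixed positive interval $[\,4r_2,\,(1+2\alpha)(4r_2+2)\,]$, uniformly in $t$ and $\varepsilon$. On $I_2$ one has $u>0$ and $1-s>0$, so $4u(1-s)\ge 0$ and hence $(\Delta\psi_t+1)D\ge R\ge -C$; dividing by $D$ yields $\Delta\psi_t\ge-\tilde C$ with $\tilde C$ independent of $\varepsilon$. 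This is the lower bound.

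For the upper bounds I would bound $u$ from above via \eqref{1}. Set $Q:=\Delta f_t+\Delta\psi_t+(r_1+1)+\alpha(1-t)(2r_1+1)=u+(\Delta\psi_t+1)$; substituting the formula for $\Delta\psi_t+1$ just obtained gives $Q=(uD'+R)/D$, where $D':=D+4(1-s)=(1+2\alpha(1-t))(4r_2+2)+2(1-s)$ again lies in a fixed positive interval. Plugging this into \eqref{1}, which reads $MP(QN+W)=C_0$ with $P=u$, $M:=2r_2+s+\alpha(1-t)(4r_2+2)$, $N:=(2r_2+2)-s+\alpha(1-t)(4r_2+2)$ and $C_0$ the right-hand side (independent of $t$), turns it into the quadratic equation
\[
\frac{MND'}{D}\,u^2+\Big(\frac{MNR}{D}+MW\Big)u-C_0=0 .
\]
Now $M$, $N$, $D$, $D'$ all lie in fixed positive intervals, so the leading coefficient is $\ge a_0>0$ with $a_0$ independent of $\varepsilon$; since $MW\ge0$ and $|R|\le C$, the middle coefficient is $\ge -C$; and $|C_0|\le C$ independently of $\varepsilon$ --- this is where Lemma \ref{i5} enters, combined with the a priori bound $|\phi|_{g_0}^2<1$, which keeps $\psi_0$ bounded below (hence $e^{-\psi_0}$, and with it $W$ at $t=0$, bounded above) uniformly in $\varepsilon$. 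A short case analysis of $a_0u^2+Bu-C_0=0$ for $u>0$, $a_0>0$, $B\ge -C$, $|C_0|\le C$ (treat $B\ge 0$ and $B<0$ separately) then gives $u\le C$ with $C$ independent of $\varepsilon$ and $t$.

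Finally, inserting $u\le C$ into the first displayed identity gives $\Delta\psi_t+1=(R+4u(1-s))/D\le (C+4C)/(4r_2)\le C$, so $\Delta\psi_t\le C$; together with the lower bound this proves the lemma. I expect the quadratic step to be the main obstacle: one has to arrange \eqref{1} to be a genuine quadratic in $u$ with an $\varepsilon$-uniform positive lower bound on the leading coefficient, and --- the really delicate point, exactly the one flagged in the introduction --- one must verify that the constant term $C_0$, assembled entirely from the initial metric $h_0$, is bounded independently of $\varepsilon$. This rests on Lemma \ref{i5} (the $t=0$ estimates are $\varepsilon$-free) and on the a priori inequality $|\phi|_{g_0}^2<1$, which prevents $\psi_0$ from degenerating as $\varepsilon\to0$; the sign $W\ge 0$, already used in Lemmas \ref{i2} and \ref{4}, is what keeps the middle coefficient of the quadratic bounded below.
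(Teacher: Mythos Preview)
Your argument is correct and close to the paper's. The lower bound is derived identically from \eqref{2}. For the upper bound the paper is marginally more direct: it drops the nonnegative terms $u^{2}MN$ and $WMu$ from \eqref{1} to obtain $(\Delta\psi+1)\,u\,MN\le C_0$, then substitutes $4u(1-s)=(\Delta\psi+1)D-R$ from \eqref{2} to reach a quadratic inequality in $(\Delta\psi+1)$; you instead substitute in the opposite direction, obtain a quadratic in $u$, and then feed $u\le C$ back into \eqref{2}. The two routes are essentially equivalent.

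One caution: the lemma only asserts that the \emph{lower} bound $\tilde C$ is independent of $\varepsilon$; the upper bound $C$ need not be. Your claim that $C_0$ is $\varepsilon$-free is therefore unnecessary, and the justification you sketch for it --- that $|\phi|_{g_0}^2<1$ forces a uniform lower bound on $\psi_0$, hence a uniform upper bound on $e^{-\psi_0}$ --- breaks down at the zeros of $\phi$ and would require additional argument if you actually needed it. This does not affect the validity of your proof of the lemma as stated, since for fixed $\varepsilon$ the right-hand side of \eqref{1} is a fixed smooth function on the compact surface $\Sigma$ and hence bounded.
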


\begin{proof}
Since $t\in I_2$, we have $\Delta f + r_1+ \alpha (1-t)(2r_1+1) >0$. So from equations \eqref{2} we get
\begin{equation*}
    \begin{split}
        & \big(1+\Delta \psi \big)\Big( 2(|\phi|^2_{g}-1)+(1+2\alpha(1-t))(4r_2+2) \Big) \\
        & \geq 2(2r_1+1)(4r_2+2)(2\alpha +1)\varepsilon \psi_t.
    \end{split}
\end{equation*}
  Thus using Lemma\ref{6} we see $\Delta \psi \geq -\Tilde{C}$, for some positive $\Tilde{C}$ independent of $\varepsilon$. Now equation \eqref{1} gives
  \begin{equation*}
    \begin{split}
        & \big(1+\Delta \psi \big)\Big( \Delta f + r_1+ \alpha (1-t)(2r_1+1)\Big)\Big( (2r_2+2)- |\phi|_{g}^2 \\
        & + \alpha (1-t)(4r_2+2) \Big)\Big( 2r_2 + |\phi|_{g}^2 + \alpha (1-t)(4r_2+2) \Big) \leq C.
    \end{split}  
  \end{equation*}
 
  Using equation \eqref{2} we have
  \begin{equation*}
      \begin{split}
           & \big(1+\Delta \psi \big) \bigg\{ \big(1 + \Delta \psi \big) \Big( 2(|\phi|^2_{g} -1 )+ \Big ( 1 + 2\alpha (1-t) \Big) (4r_2+2) \Big)\\
           & \qquad \qquad -2(2r_1+1)(4r_2+2)(2\alpha+1))\varepsilon \psi \bigg\} \\
           & \leq \displaystyle \frac{C}{\left( (2r_2+2)- |\phi|_{g}^2 + \alpha (1-t)(4r_2+2) \right) \left( 2r_2 + |\phi|_{g}^2 + \alpha (1-t)(4r_2+2) \right)},
      \end{split}
  \end{equation*}
  which implies $\Delta \psi \leq C$.
\end{proof}
With these above estimates of $\psi_t$ in hand, we now estimate $f_t$ as follows.
\begin{lem}\label{12}
If $t \in I_2$, then $||\Delta f_t|| \leq C$.
\end{lem}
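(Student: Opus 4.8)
The plan is to follow the proof of Lemma \ref{011} in Section \ref{sec:first} almost verbatim, since equation \eqref{1} has exactly the same shape as \eqref{01}. The lower bound is automatic: because $t \in I_2$, by definition $\Delta f_t + r_1 + \alpha(1-t)(2r_1+1) > 0$, so $\Delta f_t > -\big(r_1 + \alpha(2r_1+1)\big)$, a bound independent of $\varepsilon$. All the content is in the upper bound $\Delta f_t \leq C$, which I would extract from \eqref{1}. The right-hand side of \eqref{1} is a fixed function of the data at $t=0$, and it is bounded by a constant independent of $\varepsilon$: Lemma \ref{i5} bounds $\Delta\psi_0$ and hence, via $2f_0 + \psi_0 = 0$, also $\Delta f_0$, while the $\varepsilon$-uniform control of $\psi_0$ from subsection \ref{initial2} together with the smoothness of $\phi$ handles the remaining $\nabla\phi$-term. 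Thus \eqref{1} asserts that a quartic product of positive factors, plus a nonnegative $\nabla\phi$-term multiplied by positive factors, equals a bounded quantity.

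To turn this into an estimate I would abbreviate, pointwise on $\Sigma$: $a := \Delta f + r_1 + \alpha(1-t)(2r_1+1)$, which is $> 0$ since $t \in I_2$; $b := 2r_2 + |\phi|_g^2 + \alpha(1-t)(4r_2+2) \geq 2r_2$; $c := (2r_2+2) - |\phi|_g^2 + \alpha(1-t)(4r_2+2) \geq 2r_2 + 1$, the last two lower bounds courtesy of Lemma \ref{4}; $d := \Delta f + \Delta\psi + (r_1+1) + \alpha(1-t)(2r_1+1) = a + \Delta\psi + 1$; and $N := \sqrt{-1}\,\nabla^{1,0}\phi \wedge \nabla^{0,1}\phi^{\dag_g}/\omega_\Sigma \geq 0$. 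Then \eqref{1} reads $a\,b\,(c\,d + N) = \mathrm{RHS}$ with $|\mathrm{RHS}| \leq C$. Now fix a point and split into cases. If $d < 0$ there, then $a < -1 - \Delta\psi \leq \tilde C - 1$, using the lower bound $\Delta\psi \geq -\tilde C$ of Lemma \ref{11}. If instead $d \geq 0$ there, then $c\,d + N \geq c\,d \geq 0$, so $0 \leq a\,b\,c\,d \leq a\,b\,(c\,d + N) = \mathrm{RHS} \leq C$, whence $a\,d \leq C/\big(2r_2(2r_2+1)\big) =: C'$; and if moreover $a \geq \tilde C$ then $d = a + \Delta\psi + 1 \geq 1$, forcing $a \leq a\,d \leq C'$. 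In every case $a \leq \max(\tilde C, C')$, hence $\Delta f_t \leq a \leq \max(\tilde C, C')$ pointwise; combined with the lower bound this gives $\|\Delta f_t\|_{C^0} \leq C$ with $C$ independent of $\varepsilon$.

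I do not anticipate a real obstacle at this step. Once the two-sided bound on $\Delta\psi_t$ of Lemma \ref{11} is in hand, the estimate is a short maximum-principle-style manipulation of \eqref{1} that uses only $a > 0$ and the strict lower bounds on $b$ and $c$; in particular it invokes no positivity of \eqref{1} or \eqref{2} beyond these, so the genuinely delicate interaction between the two coupled equations remains quarantined inside Lemma \ref{11}. The only point demanding care is bookkeeping: every constant entering the argument --- $\tilde C$ from Lemma \ref{11}, and the right-hand-side bound via Lemma \ref{i5} --- must be kept $\varepsilon$-independent, which is precisely how those lemmas were formulated, so the bound on $\Delta f_t$ inherits the same uniformity. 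This uniformity is what the openness argument to come will require.
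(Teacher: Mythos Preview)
Your proof is correct and follows essentially the same approach as the paper's own proof: both use the membership $t\in I_2$ for the lower bound on $\Delta f_t$, then from equation \eqref{1} extract the inequality $\big(\Delta f+\Delta\psi+(r_1+1)+\alpha(1-t)(2r_1+1)\big)\big(\Delta f+r_1+\alpha(1-t)(2r_1+1)\big)\le C$ and combine it with the lower bound on $\Delta\psi$ from Lemma \ref{11} to deduce the upper bound. Your write-up is simply more explicit in the case split and in tracking $\varepsilon$-independence of the constants, while the paper records the argument in two lines.
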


\begin{proof}
 Since $t\in I_2$, we have $\Delta f + r_1+\alpha (1-t)(2r_1+1)>0$. Therefore from the equation \eqref{1} we get
\begin{equation*}
    \Big( \Delta f +\Delta \psi +(r_1+1)+ \alpha (1-t)(2r_1+1)\Big) \Big(  \Delta f + r_1+ \alpha (1-t)(2r_1+1) \Big) \leq C
\end{equation*}
Therefore $\Delta f \leq C$, otherwise it contradicts Lemma \ref{11}.
\end{proof}

Using the formula \eqref{013} and Lemma \ref{12}, we have the following.

\begin{lem}
There exists positive constant $C$ such that $||f_t||_{C^0}\leq C$, whenever $t \in I_2$.
\end{lem}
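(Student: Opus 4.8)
The plan is to feed the uniform bound on $\Delta f_t$ from Lemma \ref{12} into the Green representation formula \eqref{013}. The first observation is that the coupled system \eqref{1}--\eqref{2} depends on $f_t$ only through $\Delta f_t$, since $|\phi|_{g_t}^2 = |\phi|_k^2\, e^{-\psi_t}$ involves $\psi_t$ alone; hence adding a constant to $f_t$ again produces a solution, and for the purpose of a priori estimates we are free to pass to the representative with $\int_\Sigma f_t\,\omega_\Sigma = 0$ (this is also exactly the slice $\mathcal{B}$ one uses in the openness step, just as in Section \ref{sec:first}). With this normalization the average term in \eqref{013} drops out, leaving
\[
f_t(x) = \int_\Sigma G(x,y)\,\Delta f_t(y)\,\omega_\Sigma(y).
\]

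Next I would estimate this integral pointwise. Using the stated two-sided bound $-C\{1+|\ln(d_{\omega_\Sigma}(x,y))|\} \le G(x,y) \le 0$ and pulling out the sup-norm,
\[
|f_t(x)| \;\le\; ||\Delta f_t||_{C^0}\,\int_\Sigma |G(x,y)|\,\omega_\Sigma(y) \;\le\; C\,||\Delta f_t||_{C^0}\,\int_\Sigma \big(1 + |\ln d_{\omega_\Sigma}(x,y)|\big)\,\omega_\Sigma(y).
\]
The elementary point to check is that $\sup_{x\in\Sigma}\int_\Sigma |\ln d_{\omega_\Sigma}(x,y)|\,\omega_\Sigma(y) < \infty$: on a fixed geodesic ball about $x$ the area form is comparable to $\rho\,d\rho\,d\theta$ and $\int_0^\delta \rho\,|\ln\rho|\,d\rho < \infty$, while outside that ball the integrand is bounded by $|\ln\delta|$ and $\Sigma$ has finite volume; compactness makes all the implied constants uniform in $x$. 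This produces a purely geometric constant $C'$, independent of $t$ (and of $\varepsilon$), with $\int_\Sigma |G(x,y)|\,\omega_\Sigma(y) \le C'$ for every $x$.

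Combining the two displays with Lemma \ref{12} gives $||f_t||_{C^0} \le C'\,||\Delta f_t||_{C^0} \le C$ uniformly for $t \in I_2$, which is the claim. There is no real analytic obstacle here; the only point that deserves a word of justification is the legitimacy of the normalization $\int_\Sigma f_t\,\omega_\Sigma = 0$, and that follows from the fact --- already exploited in Section \ref{sec:first} --- that the equations determine $f_t$ only up to an additive constant, so that on the continuity path we may always work with the mean-zero representative.
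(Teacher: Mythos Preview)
Your proof is correct and follows exactly the paper's approach: the paper's proof consists solely of the sentence ``Using the formula \eqref{013} and Lemma \ref{12}, we have the following,'' and you have spelled out precisely this argument, including the mean-zero normalization (which the paper imposes via the set $\mathcal{C}$ in the openness step) and the standard integrability of $|\ln d_{\omega_\Sigma}|$ that makes $\int_\Sigma |G(x,\cdot)|\,\omega_\Sigma$ uniformly bounded.
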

Now one can easily conclude that $I_2$ is closed as follows.\\

Let $t_n \in I_2$ be such that $t_n \rightarrow t$. To prove $I_2$ is closed, we must show $t \in I_2$. As $t_n \in I_2$, we have $ \Delta f_{t_n}+r_1+\alpha (1-t)(2r_1+1)>0$. Arzela-Ascoli theorem, together with the above estimates, we get a subsequence of $t_n$ again call the subsequence by $t_n$ such that $f_{t_n} \rightarrow f $ and $\psi_{t_n} \rightarrow \psi$ in $C^{2,\beta}$. Then $\Delta f + r_1 + \alpha (1-t)(2r_1+1) \geq 0$ and by usual bootstrapping argument we have $f$ and $\psi$ are smooth. Now if $\Delta f + r_1 + \alpha (1-t)(2r_1+1) = 0$ at some point, then (\ref{1}) gives a contradiction. Hence $\Delta f + r_1 + \alpha (1-t)(2r_1+1) >0$. Therefore $t \in I_2$.\\

The only thing we are left with is proving the openness of $I_2$.

\subsection {Openness of \texorpdfstring{$I_2$}{\texttwoinferior}}
For $0<\beta <1$, let $\mathcal{C}$ be the subset of $C^{2,\beta} \times C^{2,\beta}$ defined by  $\mathcal{C} := \Big\{ (f, \psi) \in C^{2, \beta} \times C^{2, \beta} \Big| \Delta f + r_1 + \alpha (1-t)(2r_1+1)>0, \displaystyle \int_{\Sigma} f \omega_{\Sigma}=0 \Big\}$. Now let us define the map $T: \mathcal{C} \times [0,1] \rightarrow  C^{0,\beta}\times C^{0,\beta} $ by $T(f,\psi , t)=\Big(T_1(f, \psi, t),T_2(f, \psi, t)\Big)$, where
\begin{equation}\label{14}
\begin{split}
    & T_{1}(f,\psi,t)\\
   =&\bigg\{ \Big( \Delta f+\Delta \psi +(r_1+1)+ \alpha (1-t)(2r_1+1)\Big) \Big( 2r_2 + |\phi|_g^2 +\\
    & \alpha (1-t)(4r_2+2) \Big)  \Big( \Delta f+ r_1+ \alpha (1-t)(2r_1+1)\Big) \\
    & \Big( (2r_2+2)- |\phi|_g^2 + \alpha (1-t)(4r_2+2) \Big) + \sqrt{-1} \frac{\nabla^{1,0} \phi \wedge \nabla^{0,1} \phi^{\dag_g}}{\omega_{\Sigma}} \\
    & \Big( 2r_2 + |\phi|_g^2 + \alpha (1-t)(4r_2+2) \Big) \Big( \Delta f+ r_1+ \alpha (1-t)(2r_1+1)\Big) \bigg\} \\
    & - \bigg\{ \Big( \Delta f_0 + \Delta \psi_0 + (r_1 + 1 )+ \alpha (2r_1+1) \Big) \Big( 2r_2 + |\phi|_{g_0}^2 + \alpha (4r_2+2) \Big)\\
    &  \Big( (2r_2+2)- |\phi|_{g_0}^2 + \alpha (4r_2+2) \Big) \Big( \Delta f_0 + r_1+ \alpha (2r_1+1)\Big) \\
    & + \sqrt{-1}\frac{\nabla^{1,0} \phi \wedge \nabla^{0,1}\phi^{\dag_{g_0}}}{\omega_{\Sigma}} \Big( 2r_2 + |\phi|_{g_0}^2 + \alpha (4r_2+2) \Big)\\
    & \Big( \Delta f_0 + r_1+ \alpha (2r_1+1)\Big) \bigg\},
\end{split}
\end{equation}
and
\begin{equation}\label{15}
\begin{split}
    & T_2(f,\psi,t)\\
   =& 2\bigg( 2\Delta f+\Delta \psi +\big(1+ 2\alpha (1-t)\big)(2r_1+1) \bigg) (|\phi|_g^2 -1)\\
    & + \big( \Delta \psi +1 \big)\big(1+ 2\alpha (1-t)\big)(4r_2+2)\\
    & -2(2r_1 + 1)(4r_2 + 2)(2\alpha +1) \varepsilon \psi.
\end{split}
\end{equation}
Then the linearization of $T_1$ at a point $(f, \psi)$ will be
\begin{equation}\label{16}
\begin{split}
    & DT_{1}(f,\psi,t)[\delta f,\delta\psi]\\
   =& \Big( 2r_2 + |\phi|_g^2 + \alpha (1-t)(4r_2+2) \Big)\bigg[\Big( (2r_2+2)- |\phi|_g^2 +\\
    & \alpha (1-t)(4r_2+2)\Big) \Big( 2\Delta f+\Delta \psi +\big(1+ 2\alpha (1-t)\big)(2r_1+1) \Big) \\
    & +\sqrt{-1} \frac{\nabla^{1,0} \phi \wedge \nabla^{0,1} \phi^{\dag_g}}{\omega_{\Sigma}}\bigg]\Delta \delta f + \Big[ \Big( 2r_2 + |\phi|_g^2 +  \\
    & \alpha (1-t)(4r_2+2) \Big) \Big( \Delta f+ r_1+ \alpha (1-t)(2r_1+1)\Big) \\
    & \Big( (2r_2+2) - |\phi|_g^2 + \alpha (1-t)(4r_2+2) \Big) \Big] \Delta \delta\psi \\
    & -\Big( \Delta f+ r_1+ \alpha (1-t)(2r_1+1)\Big)\bigg[|\phi|_g^2\Big\{2(1-|\phi|_g^2)\\
    & \Big( \Delta f+\Delta \psi +(r_1+1)+ \alpha(1-t)(2r_1+1)\Big)+\\
    & \sqrt{-1}\frac{\nabla^{1,0}\phi \wedge \nabla^{0,1} \phi^{\dag_g}}{\omega_{\Sigma}}\Big\} +\sqrt{-1}\frac{\nabla^{1,0} \phi \wedge \nabla^{0,1} \phi^{\dag_g}}{\omega_{\Sigma}}\\
    & \Big( 2r_2 + |\phi|_g^2 + \alpha (1-t)(4r_2+2) \Big) \bigg]\delta\psi  \\
    & +\Big( \Delta f+ r_1 + \alpha (1-t)(2r_1+1)\Big)\\
    & \bigg[\sqrt{-1}\frac{\partial(\delta \psi) \wedge \nabla^{0,1}\phi^{\dag_g}}{\omega_{\Sigma}}+\sqrt{-1}\frac{\nabla^{1,0} \phi \wedge\phi^{\dag_g} \nabla^{0,1}(\delta\psi)}{\omega_{\Sigma}}\bigg],
\end{split}
\end{equation}
and the linearization of $T_2$ at $(f, \psi )$ will be
\begin{equation}\label{17}
\begin{split}
    & DT_{2}(f,\psi,t)[\delta f,\delta\psi]\\
 =  & 4(|\phi|_g^2-1)\Delta \delta f+ \bigg(2(|\phi|_g^2-1)+\Big(1+ 2\alpha (1-t)\Big)(4r_2+2)\bigg)\Delta \delta \psi \\
    & -\delta\psi \bigg[ 2|\phi|_g^2\bigg( 2\Delta f+\Delta \psi +\Big(1+ 2\alpha (1-t)\Big)(2r_1+1) \bigg)+\\
    & 2(2r_1 + 1)(4r_2 + 2)(2\alpha +1) \varepsilon \bigg].
\end{split}
\end{equation}
We shall show that $DT=[DT_1, DT_2]$ is an isomorphism at a point $(f_t, \psi_t)$, for $t\in I_2$. Then by the implicit function theorem for Banach manifolds, we can conclude that $I_2$ is open.

Suppose $(\delta f, \delta\psi)\in Ker(DT(f,\psi, t))$ where $t \in I_2$, then we have $DT_1(f, \psi, t)[\delta f, \delta\psi]= 0$ and $DT_2(f, \psi, t)[\delta f, \delta\psi]= 0$. Now solving $DT_2[\delta f, \delta\psi]=0$ for $\Delta \delta f$ and substituting the value in $DT_1[\delta f, \delta\psi]=0$ we get,

\begin{equation}\label{19}
\begin{split}
  & \Bigg[\Big( 2r_2 + |\phi|_g^2 + \alpha (1-t)(4r_2+2) \Big)\bigg\{\Big( (2r_2+2)- |\phi|_g^2 + \\
  & \alpha (1-t)(4r_2+2)\Big) \Big( 2\Delta f+\Delta \psi +(1+ 2\alpha (1-t))(2r_1+1) \Big) \\
  & + \sqrt{-1} \frac{\nabla^{1,0} \phi \wedge \nabla^{0,1} \phi^{\dag_g}}{\omega_{\Sigma}}\bigg\}
     \Big(2(|\phi|_g^2-1)+\left(1+ 2\alpha (1-t)\right)(4r_2+2)\Big) \\ 
  & + 4\left(1-|\phi|^2_{g}\right)  \Big( 2r_2 + |\phi|_g^2 + \alpha (1-t)(4r_2+2) \Big)\Big( \Delta f+ r_1 \\ 
  & + \alpha (1-t)(2r_1+1)\Big)\Big( (2r_2+2)- |\phi|_g^2 + \alpha (1-t)(4r_2+2) \Big)\Bigg]\Delta\delta\psi \\
 =& \delta\psi \Bigg[\Big( 2r_2 + |\phi|_g^2 + \alpha (1-t)(4r_2+2) \Big)\bigg\{\Big( (2r_2+2)- |\phi|_g^2+ \\
  & \alpha (1-t)(4r_2+2)\Big) \Big( 2\Delta f+\Delta \psi +(1+ 2\alpha (1-t))(2r_1+1) \Big)\\
  & +\sqrt{-1}\frac{\nabla^{1,0} \phi \wedge \nabla^{0,1}\phi^{\dag_g}}{\omega_{\Sigma}}\bigg\}\bigg\{2|\phi|_g^2\Big( 2\Delta f +\Delta \psi +(1+ 2\alpha (1-t))(2r_1+1) \Big)\\
  & + 2(2r_1 + 1)(4r_2 + 2)(2\alpha +1) \varepsilon \bigg\}+4\big(1-|\phi|^2_{g}\big)|\phi|_g^2\Big( \Delta f+ r_1 + \\
  & \alpha (1-t)(2r_1+1)\Big) \bigg\{2(1-|\phi|_g^2)\Big( \Delta f+\Delta \psi +(r_1+1) +\\
  & \alpha(1-t)(2r_1+1)\Big) + \sqrt{-1}\frac{\nabla^{1,0}\phi \wedge \nabla^{0,1} \phi^{\dag_g}}{\omega_{\Sigma}}\bigg\} +\sqrt{-1} \frac{\nabla^{1,0} \phi \wedge \nabla^{0,1} \phi^{\dag_g}}{\omega_{\Sigma}}\\
  & \Big( 2r_2  + |\phi|_g^2 + \alpha (1-t)(4r_2+2) \Big)\Bigg]+\Big( \Delta f+ r_1+ \alpha (1-t)(2r_1+1)\Big)\\
  & \bigg\{\sqrt{-1} \frac{\partial(\delta \psi) \wedge \nabla^{0,1}\phi^{\dag_g}}{\omega_{\Sigma}} +\sqrt{-1} \frac{\nabla^{1,0} \phi \wedge\phi^{\dag_g} \nabla^{0,1}(\delta\psi)}{\omega_{\Sigma}}\bigg\}.
\end{split}
\end{equation}
Lemma (\ref{11}) shows that for $t\in I_2$, we can choose $\varepsilon$ large enough so that
\begin{eqnarray}\label{18}
   \nonumber&&  \Big( \Delta \psi + 1 +2(2r_1 + 1)(4r_2 + 2)(2\alpha +1) \varepsilon \Big) >0.
\end{eqnarray}
Using the maximum principle on the equation (\ref{19}), we have $\delta\psi=0$. Now putting $\delta\psi=0$ in $DT_2[\delta f, \delta \psi]=0$ gives $\delta f=0$. Hence for $t\in I_2$ we get $Ker(DT(f_t, \psi_t, t))=0$. Now we shall prove that $Ker(DT^*(f_t, \psi_t, t))$ is also trivial.\\

\indent As one can imagine, computing the operator $DT^*$ will be very complicated. Since we are only interested in the kernel of $DT^*$ and we already know the kernel of $DT$, it is enough to calculate the index of the operator $DT$. To do so, let us define the following.\\

 \indent For $t\in I_2$ and $s \in [0,1]$, let $T^{s}\big( f, \psi, t \big)=\Big( T_{1}^{s}\big( f, \psi, t \big), T_{2}^{s}\big( f, \psi, t \big) \Big)$, where
\begin{equation}\label{20}
\begin{split}
 & T^{s}_{1}(f,\psi,t)\\
 =& \Big( 2r_2 + s|\phi|_g^2 + \alpha (1-t)(4r_2+2) \Big)\bigg[\Big( (2r_2+2)- s|\phi|_g^2 + \\
&\alpha (1-t)(4r_2+2)\Big) \Big( 2s\Delta f + s\Delta \psi +\big(1+ 2\alpha (1-t)\big)(2r_1+1) \Big) + \\
&\sqrt{-1} s\frac{\nabla^{1,0} \phi \wedge \nabla^{0,1} \phi^{\dag_g}}{\omega_{\Sigma}}\bigg]\Delta \delta f +  \Delta \delta\psi   \Big( 2r_2 + s|\phi|_g^2 + \alpha (1-t)(4r_2+2) \Big) \\
&\Big( s \Delta f+ r_1+ \alpha (1-t)(2r_1+1)\Big)\Big( (2r_2+2)- s|\phi|_g^2 + \alpha (1-t)(4r_2+2) \Big),
\end{split}
\end{equation}
and 
\begin{equation}\label{21}
\begin{split}
& T^{s}_2(f,\psi,t)\\
= & 4s(|\phi|_g^2-1)\Delta \delta f+ \bigg(2s(|\phi|_g^2-1)+\Big(1+ 2\alpha (1-t)\Big)(4r_2+2)\bigg)\Delta \delta \psi.
\end{split}
\end{equation}
The following lemma says that we can talk about the index of operator $T^s$.
\begin{lem}
$T^s$ are elliptic system for $s \in [0,1]$.
\end{lem}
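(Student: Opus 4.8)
The plan is to verify ellipticity of the determined second‑order system $T^s=(T_1^s,T_2^s)$ on the Riemann surface $\Sigma$ by computing its principal symbol at an arbitrary nonzero cotangent vector $\xi$ and checking that it is invertible. From the definitions \eqref{20}--\eqref{21}, each $T_i^s$ has the pure form $T_i^s=a_{i1}\Delta(\delta f)+a_{i2}\Delta(\delta\psi)$ with no lower‑order terms, the $a_{ij}$ being explicit smooth functions on $\Sigma$ assembled from $f_t,\psi_t,\phi,s,t$. Hence the principal symbol at $\xi\neq 0$ is $-|\xi|^2$ times the matrix $M=(a_{ij})$, so ellipticity of $T^s$ is equivalent to the pointwise nonvanishing of $\det M$ on $\Sigma$ for every $s\in[0,1]$ (with $t\in I_2$ fixed).

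To compute $\det M$ I would set $P=2r_2+s|\phi|_g^2+\alpha(1-t)(4r_2+2)$, $Q=(2r_2+2)-s|\phi|_g^2+\alpha(1-t)(4r_2+2)$, $R=s\Delta f+r_1+\alpha(1-t)(2r_1+1)$, $W=\sqrt{-1}\,\nabla^{1,0}\phi\wedge\nabla^{0,1}\phi^{\dag_g}/\omega_\Sigma\ge 0$ and $S_0=2s\Delta f+s\Delta\psi+(1+2\alpha(1-t))(2r_1+1)$, and record the elementary identities $P+Q=(1+2\alpha(1-t))(4r_2+2)$ (so that $a_{22}=(P+Q)-2s(1-|\phi|_g^2)$) and $S_0=2R+1+s\Delta\psi$. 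A direct expansion of $a_{11}a_{22}-a_{12}a_{21}$ then collapses to
\[
\det M=P\Big[\,(P+Q)\big(QS_0+sW\big)-2s(1-|\phi|_g^2)\big(Q(1+s\Delta\psi)+sW\big)\,\Big].
\]

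The crux is the single scalar inequality $QS_0+sW>0$, i.e.\ that the $\Delta(\delta f)$‑coefficient $a_{11}/P$ of $T_1^s$ is strictly positive. Granting it, positivity of $\det M$ is elementary: writing $\Lambda=QS_0+sW$ and $\Lambda'=Q(1+s\Delta\psi)+sW$, one has $\Lambda-\Lambda'=2QR\ge 0$ and $(P+Q)\ge 2s(1-|\phi|_g^2)$ (since $P+Q\ge 4r_2+2\ge 6$ while $2s(1-|\phi|_g^2)\le 2$, using Lemma \ref{4}); splitting according to the sign of $\Lambda'$ gives $(P+Q)\Lambda-2s(1-|\phi|_g^2)\Lambda'\ge a_{22}\Lambda'>0$ when $\Lambda'>0$, and $(P+Q)\Lambda>0$ when $\Lambda'\le 0$. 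So it remains to prove $QS_0+sW>0$, and here the coupled structure enters: writing equation \eqref{1} in the factored form $\mathcal B\,\mathcal C\,(\mathcal A\mathcal D+W)=F_0$, where $\mathcal B=2r_2+|\phi|_g^2+\alpha(1-t)(4r_2+2)$, $\mathcal C=\Delta f+r_1+\alpha(1-t)(2r_1+1)$, $\mathcal D=(2r_2+2)-|\phi|_g^2+\alpha(1-t)(4r_2+2)$, $\mathcal A=\mathcal C+(1+\Delta\psi)$, and $F_0>0$ is the ($t$‑independent) function on the right‑hand side of \eqref{1}, one substitutes $\mathcal A=(F_0/(\mathcal B\mathcal C)-W)/\mathcal D$ and rewrites $QS_0+sW$ (after using $S_0=2R+1+s\Delta\psi$ and $\mathcal A=\mathcal C+1+\Delta\psi$) as a sum of manifestly nonnegative terms plus the single correction $-s(1-s)|\phi|_g^2W/\mathcal D$. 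One then shows this correction is absorbed: for $t$ near $1$ by the term proportional to $F_0$ (which is large because $\alpha$ was chosen large), and for $t$ away from $1$ by the largeness of $\mathcal D$; the inputs are $|\phi|_g^2<1$ (Lemma \ref{4}), the $\varepsilon$‑independent bounds on $\Delta\psi$, $\varepsilon\psi$, $\Delta f$ (Lemmas \ref{6}, \ref{11}, \ref{12}), the uniform lower bound $\mathcal C\ge c_0>0$ obtained by comparing \eqref{1} with \eqref{2}, and $R>0$ for $t\in I_2$ (via the splitting $R=s\,\mathcal C+(1-s)(r_1+\alpha(1-t)(2r_1+1))$).

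The main obstacle is precisely this last estimate. Ellipticity of $\det M$ is not visible from the symbol by inspection, because $1+s\Delta\psi$ (hence $\Lambda'$) is not sign‑definite along the homotopy; one is forced to feed $\Delta\psi$ back into the picture through the coupled equations \eqref{1}--\eqref{2} and the a priori estimates, and to handle the two regimes $t\approx 1$ and $t$ away from $1$ separately. This is of the same delicate flavour as the rest of Section \ref{sec:second}, and is exactly where the explicit form of the vortex bundle is used.
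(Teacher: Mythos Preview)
Your overall approach coincides with the paper's: reduce ellipticity to pointwise positivity of the $2\times 2$ principal--symbol determinant, write it as $\det M=P\big[(P+Q)\Lambda-2s(1-|\phi|_g^2)\Lambda'\big]$ with $\Lambda=QS_0+sW$ and $\Lambda'=Q(1+s\Delta\psi)+sW$, and split on the sign of $\Lambda'$. Your case $\Lambda'>0$ is exactly the paper's Case~(b), handled there via the equivalent rearrangement $\det M/P=2(P+Q)QR+A_{22}\Lambda'$, both summands positive because $t\in I_2$ gives $R>0$.

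The gap is your treatment of the case $\Lambda'\le 0$. The paper disposes of this (its Case~(a)) in a single sentence, citing only $t\in I_2$; it invokes neither equation~\eqref{1} in factored form, nor any uniform lower bound on $\mathcal C$, nor a regime split in~$t$. You instead elevate the inequality $\Lambda>0$ to ``the crux'' and propose to obtain it by substituting $\mathcal A=(F_0/(\mathcal B\mathcal C)-W)/\mathcal D$ and absorbing the residual $-s(1-s)|\phi|_g^2W/\mathcal D$ into positive pieces. As written this is not a proof: the uniform bound $\mathcal C\ge c_0>0$ you rely on is nowhere established in the paper (only the open condition $\mathcal C>0$ comes from $t\in I_2$), and the ``absorbing'' step in the two $t$--regimes is asserted rather than carried out quantitatively. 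You yourself flag this as ``the main obstacle''. So while your algebraic decomposition of $QS_0+sW$ is correct and your identification of $\Lambda>0$ as the relevant point is reasonable, you have replaced the paper's one-line assertion by a considerably more elaborate argument that remains incomplete.
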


 \begin{proof}
 Let 
 \begin{eqnarray*}
 A=\begin{bmatrix}
  A_{11} & A_{12} \\
  A_{21} & A_{22}
 \end{bmatrix}
 \end{eqnarray*}
 where,
 \begin{equation*}
 \begin{split}
  A_{11} & = \Big( 2r_2 + s|\phi|_g^2 + \alpha (1-t)(4r_2+2) \Big)\\
  &\bigg\{ \Big( (2r_2+2)- s|\phi|_g^2+ \alpha (1-t)(4r_2+2)\Big)\\
  & \Big( 2s\Delta f + s\Delta \psi +\big(1+ 2\alpha (1-t)\big)(2r_1+1) \Big)\\
  &+ s \sqrt{-1} \frac{\nabla^{1,0} \phi \wedge \nabla^{0,1} \phi^{\dag_g}}{\omega_{\Sigma}}\bigg\},\\
 A_{12} & = \Big( 2r_2 + s|\phi|_g^2 + \alpha (1-t)(4r_2+2) \Big) \\
 & \Big( s \Delta f+ r_1+ \alpha (1-t)(2r_1+1)\Big),\\
 & \Big( (2r_2+2)- s|\phi|_g^2 + \alpha (1-t)(4r_2+2) \Big),\\
 A_{21}& =4s(|\phi|_g^2-1),\\
 A_{22}& =2s(|\phi|_g^2-1)+\Big(1+ 2\alpha (1-t)\Big)(4r_2+2).
 \end{split}
 \end{equation*}
Then 
\begin{eqnarray*}
det(A) &&=
\begin{vmatrix}
A_{11} & A_{12}\\
A_{21} & A_{22}
 \end{vmatrix}\\
&& =\Big( 2r_2 + s|\phi|_g^2 + \alpha (1-t)(4r_2+2) \Big) \Bigg\{ \Big(1+ 2\alpha (1-t)\Big)\\
 && (4r_2+2) \Big[  \Big( (2r_2+2)- s|\phi|_g^2 + \alpha (1-t)(4r_2+2)\Big) \\
 && \Big( 2s\Delta f + s\Delta \psi +\big(1+ 2\alpha (1-t)\big)(2r_1+1) \Big) + s \sqrt{-1} \frac{\nabla^{1,0} \phi \wedge \nabla^{0,1} \phi^{\dag_g}}{\omega_{\Sigma}}\Big] \\
&& + 2s(|\phi|_g^2-1) \Big[ \Big( (2r_2+2)- s|\phi|_g^2 + \alpha (1-t)(4r_2+2) \Big) \\
&& \Big(s\Delta \psi +1 \big) + s \sqrt{-1} \frac{\nabla^{1,0} \phi \wedge \nabla^{0,1} \phi^{\dag_g}}{\omega_{\Sigma}} \Big] \Bigg\}
\end{eqnarray*}
\textbf{Case (a).} If
\begin{equation*}
\begin{split}
&\Big[ \Big( (2r_2+2)- s|\phi|_g^2 + \alpha (1-t)(4r_2+2) \Big) \big(s\Delta \psi +1 \big)\\
&\qquad+ s \sqrt{-1} \frac{\nabla^{1,0} \phi \wedge \nabla^{0,1} \phi^{\dag_g}}{\omega_{\Sigma}} \Big] \leq 0.
\end{split}
\end{equation*}
As $t\in I_2$ i.e., $\Delta f + r_1 + \alpha (1-t)(2r_1+1)>0$, therefore $det(A)>0$.\\ 

\textbf{Case (b).} If
\begin{equation*}
\begin{split}
& \Big[ \Big( (2r_2+2)- s|\phi|_g^2 + \alpha (1-t)(4r_2+2) \Big) \big(s\Delta \psi +1 \big) \\
& \qquad \qquad+ s \sqrt{-1} \frac{\nabla^{1,0} \phi \wedge \nabla^{0,1} \phi^{\dag_g}}{\omega_{\Sigma}} \Big] > 0.
\end{split}
\end{equation*} 
Then,
\begin{equation*}
\begin{split}
det(A)=&\Bigg[ \Big( 2r_2 + s|\phi|_g^2 + \alpha (1-t)(4r_2+2) \Big) \Big(1+ 2\alpha (1-t)\Big)(4r_2+2) \\
&\Big( (2r_2+2)- s|\phi|_g^2 + \alpha (1-t)(4r_2+2)\Big) 2 \Big(s\Delta f + r_1 + \\
& \alpha (1-t)(2r_1+1) \Big) \Bigg] + \Bigg[ \Big( 2r_2 + s|\phi|_g^2 + \alpha (1-t)(4r_2+2) \Big) \\
&\Big\{ \Big( (2r_2+2)- s|\phi|_g^2 + \alpha (1-t)(4r_2+2) \Big) \big(s\Delta \psi +1 \big) +\\
& s \sqrt{-1}\frac{\nabla^{1,0} \phi \wedge \nabla^{0,1} \phi^{\dag_g}}{\omega_{\Sigma}} \Big\} \Big\{ 2s(|\phi|_g^2-1) +\Big(1+ 2\alpha (1-t)\Big)(4r_2+2) \Big\} \Bigg] >0,
\end{split}
\end{equation*}
the first term is positive as $t \in I_2$, and the second is positive because of the assumption. Hence $det(A)>0$ for $t \in I_2$.
\end{proof} 

We see that  $T^{s}: T^{0} \simeq T^{1}$ defines a homotopy. So the index of  Fredholm operators $T^{s}$ defined by $Ind(T^{s})=dim(Ker(T^{s}))-dim(Coker(T^{s}))$ will be constant for $0 \leq s \leq 1$, in particular $Ind(T^{0})=Ind(T^{1})$. Now

\begin{equation*}
\begin{split}
    & T^{0}(f,\psi,t)\\
   =& \Bigg( \bigg\{ \Big( 2r_2 + \alpha (1-t)(4r_2+2) \Big)\Big( (2r_2+2) + \alpha (1-t)(4r_2+2)\Big) \\
    & \big(1+ 2\alpha (1-t)\big)(2r_1+1) \Delta \delta f +\Big( 2r_2 + \alpha (1-t)(4r_2+2) \Big)\\
    & \Big( r_1+ \alpha (1-t)(2r_1+1)\Big) \Big( (2r_2+2) + \alpha (1-t)(4r_2+2) \Big) \Delta \delta\psi \bigg\}, \\
    & \big(1+ 2\alpha (1-t)\big)(4r_2+2)\Delta \delta \psi \Bigg),
\end{split}
\end{equation*}
and
\begin{equation*}
\begin{split}
   & {T^{0}}^*(f,\psi,t)\\
   = & \Bigg( \bigg\{ \big(1+ 2\alpha (1-t)\big)(4r_2+2) \Delta \delta f  -\Big( 2r_2 + \alpha (1-t)(4r_2+2) \Big) \\
  & \Big( r_1+ \alpha (1-t)(2r_1+1)\Big) \Big( (2r_2+2) + \alpha (1-t)(4r_2+2) \Big) \Delta \delta\psi \bigg\}, \\
  & \Big( 2r_2 + \alpha (1-t)(4r_2+2) \Big) \Big( (2r_2+2) + \alpha (1-t)(4r_2+2)\Big) \\
  & \big(1+ 2\alpha (1-t)\big)(2r_1+1) \Delta \delta \psi \Bigg).
\end{split}
\end{equation*} 

A simple calculation gives $Ind(T^{0})=0$. Since $DT$ and $T^{1}$ have the same principal symbol, their index must be the same. Hence, $Ind(DT)=0$ and consequently $Ker(DT)=Ker(DT^*)=0$. Therefore Fredholm’s alternative implies that $DT$ is an isomorphism. This completes the proof of the theorem (\ref{dem2}).

\section*{Acknowledgements}
 I am grateful to my advisor, Vamsi Pritham Pingali, for suggesting this problem. I also thank him for the invaluable and fruitful discussion about the same. This work is supported by a scholarship from the Indian Institute of Science.

\end{document}